\documentclass[12pt,a4paper]{amsart}
\usepackage{amssymb,color}
\usepackage[english]{babel}          
\usepackage{verbatim}
\usepackage[T1]{fontenc}


\usepackage{floatflt,graphicx}
\usepackage{color,xcolor}
\usepackage{enumerate}
\usepackage{animate}
\usepackage{amsmath}
\usepackage{amsthm}

\newcounter{minutes}\setcounter{minutes}{\time}
\divide\time by 60
\newcounter{hours}\setcounter{hours}{\time}
\multiply\time by 60 \addtocounter{minutes}{-\time}
\title[{}]{Some remarks on the Cassinian metric}

{\small

\author{Parisa Hariri}
\address{Department of Mathematics and Statistics,
  University of Turku, Turku, Finland}
\curraddr{}
\email{parisa.hariri@utu.fi}

\author{Riku Kl\'en}
\address{Department of Mathematics and Statistics,
  University of Turku, Turku, Finland}
\curraddr{}
\email{riku.klen@utu.fi}

\author{Matti Vuorinen}
\address{Department of Mathematics and Statistics,
  University of Turku, Turku, Finland}
\email{vuorinen@utu.fi}

\author{Xiaohui Zhang}
\address{Department of Physics and  Mathematics, University of Eastern Finland, 80101 Joensuu, Finland}
\email{xiaohui.zhang@uef.fi}

}


\keywords{hyperbolic metric, triangular ratio metric, Cassinian metric, distance ratio metric}

\subjclass[2010]{51M10, 30C65}

\date{}

\dedicatory{}

\commby{}






\theoremstyle{plain}
\newtheorem{thm}[equation]{Theorem}
\newtheorem{cor}[equation]{Corollary}
\newtheorem{lem}[equation]{Lemma}

\theoremstyle{definition}

\theoremstyle{remark}
\newtheorem{rem}[equation]{Remark}

\newtheorem{nonsec}[equation]{}

\numberwithin{equation}{section}

\newcommand{\beq}{\begin{equation}}
\newcommand{\eeq}{\end{equation}}

\newcommand{\ben}{\begin{enumerate}}
\newcommand{\een}{\end{enumerate}}

\newcommand{\bequu}{\begin{eqnarray*}}
\newcommand{\eequu}{\end{eqnarray*}}

\newcommand{\bequ}{\begin{eqnarray}}
\newcommand{\eequ}{\end{eqnarray}}

\newcommand{\B}{\mathbb{B}^2}

\newcommand{\Bn}{ {\mathbb{B}^n} }
\newcommand{\Rn}{ {\mathbb{R}^n} }


\newcommand{\arth}{\,\textnormal{arth}}

\newcommand{\sh}{\,\textnormal{sh}}

\renewcommand{\th}{\,\textnormal{th}}

\renewcommand{\Im}{{ \rm Im}\,}
\renewcommand{\Re}{{ \rm Re}\,}

\begin{document}
\def\thefootnote{}
\footnotetext{ \texttt{\tiny File:~\jobname .tex,
           printed: \number\year-\number\month-\number\day,
           \thehours.\ifnum\theminutes<10{0}\fi\theminutes}
} \makeatletter\def\thefootnote{\@arabic\c@footnote}\makeatother

\begin{abstract}
Some sharp inequalities between the triangular ratio metric and the Cassinian metric are proved in the unit ball.
\end{abstract}

\maketitle


\section{Introduction}\label{section1}

\setcounter{equation}{0}

Geometric function theory makes use of metrics in many ways. In the distortion theory,
which is a significant part of function theory, one seeks to estimate the distance between $f(z)$  and $f(w)$
for a given analytic function $f$ of the unit disk $  \mathbb{B}^2$ in terms of the distance between $z$ and $w$
 and their position in  $  \mathbb{B}^2$\, \cite{b,kl}. Distances are often measured in terms of
 hyperbolic or, in the case of multidimensional theory, hyperbolic type metrics, see \cite{gp,himps,k}.
Some examples of recurrent metrics are the quasihyperbolic, distance ratio, and Apollonian metrics, see \cite{gp,b2,himps}. In this paper we shall study a metric recently introduced by
Z. Ibragimov \cite{i}, the Cassinian metric, and relate it to some of these other metrics. For this
purpose we first recall the definitions of the hyperbolic metric of the unit ball ${\mathbb B}^n$
in ${\mathbb R}^n$ and the distance ratio metric of a proper open subset of  ${\mathbb R}^n$.


\begin{nonsec}{\bf Hyperbolic metric. }
Recall the definition of the hyperbolic distance $\rho_{\mathbb{B}^n}(x,y)$
between two points $x,y \in \mathbb{B}^n $ \cite{b}:
\begin{eqnarray}\label{tro}
\th{\frac{\rho_{\mathbb{B}^n}(x,y)}{2}}&=&\frac{|x-y|}{\sqrt{|x-y|^2+(1-|x|^2)(1-|y|^2)}}.
\end{eqnarray}
Here $\th$ stands for the hyperbolic tangent function.

\end{nonsec}

\begin{nonsec}{\bf Distance ratio metric.}
Let $G \subset {\mathbb R}^n\,$ be a proper open subset of ${\mathbb R}^n$ and for $x \in G$ let
$d(x,\partial G)= \inf \{  |x-y|:  y \in \partial G\}\,.$ Then
for all
$x,y\in G$, the  distance ratio
metric $j_G$ is defined as
\begin{eqnarray*}
 j_G(x,y)=\log \left( 1+\frac{|x-y|}{\min \{d(x,\partial G),d(y, \partial G) \} } \right)\,.
\end{eqnarray*}
This metric was introduced in
\cite{gp,go} in a slightly different form and in the above form in \cite{vu0}.
It is a standard tool in the study of metrics, see e.g. \cite{chkv,himps, imsz,k}. If confusion seems unlikely, then we also write $d(x)= d(x,\partial G)\,.$
\end{nonsec}

\begin{lem}\cite[  Lemma 2.41(2)]{vu}, \cite[ Lemma 7.56]{avv}\label{10} Let $G \in \{  {\mathbb B}^n , {\mathbb H}^n\}\,,$ and let
 $\rho_G$ stand for the respective hyperbolic metric. Then for all $x,y\in G$
 $$ j_G(x,y)\le \rho_G(x,y) \le 2j_G(x,y).$$
\end{lem}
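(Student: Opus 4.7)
The plan is to reduce each of the two inequalities to an elementary algebraic check by using (1.1) together with its half-space analogue $\sinh(\rho_{\mathbb{H}^n}(x,y)/2) = |x-y|/(2\sqrt{x_n y_n})$. Throughout, set $u := |x-y|$ and, after relabeling, let $a := d(x,\partial G) \le d(y,\partial G) =: b$. In either domain the triangle inequality yields $b \le a + u$: for $\mathbb{B}^n$ this is the rewriting of $\bigl||x|-|y|\bigr| \le |x-y|$, and for $\mathbb{H}^n$ it is the $n$th-coordinate projection inequality $|x_n - y_n| \le |x-y|$.

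For the lower bound $j_G \le \rho_G$, I would rewrite (1.1) in the form $\rho_{\mathbb{B}^n}(x,y) = \log\bigl((S+u)/(S-u)\bigr)$ with $S := \sqrt{u^2 + P}$ and $P := (1-|x|^2)(1-|y|^2)$. Exponentiation and a short simplification collapse $j_G \le \rho_G$ to the equivalent condition $S \le 2a+u$, i.e. $P \le 4a(a+u)$. Since $1-|z|^2 \le 2(1-|z|) = 2\,d(z,\partial\mathbb{B}^n)$, one has $P \le 2a \cdot 2b \le 4a(a+u)$, using $b \le a+u$. The half-space case is completely parallel and collapses directly to the single inequality $b \le a+u$.

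For the upper bound $\rho_G \le 2 j_G$, I would apply $\sinh$ to both sides. From (1.1) one obtains $\sinh(\rho_{\mathbb{B}^n}(x,y)/2) = u/\sqrt{P}$, and a short computation gives $\sinh(j_G(x,y)) = u(2a+u)/[2a(a+u)]$, so that the monotonicity of $\sinh$ reduces the inequality to $2a(a+u) \le \sqrt{P}\,(2a+u)$. Writing $\sqrt{P} = \sqrt{a(2-a)\,b(2-b)}$ and using that $t \mapsto t(2-t)$ is increasing on $(0,1]$ together with $a \le b \le 1$, one has $\sqrt{P} \ge a(2-a)$, so it suffices to verify $(2-a)(2a+u) \ge 2(a+u)$, i.e. $a(2-2a-u) \ge 0$; and $u \le |x|+|y| \le 2|x| = 2-2a$ holds because we normalized $|x| \ge |y|$. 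The half-space version reduces even more cleanly to $\sqrt{a}(a+u) \le \sqrt{b}(2a+u)$, which is immediate from $b \ge a$.

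If there is any real obstacle, it is only to locate the correct algebraic form so that every residual inequality is the triangle inequality, the monotonicity of $t(2-t)$ on $(0,1]$, or the size constraint $|x|,|y|<1$; once the reductions above are in place, no further substantive estimates are required.
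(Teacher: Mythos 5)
The paper does not prove this lemma; it is quoted from \cite[Lemma 2.41(2)]{vu} and \cite[Lemma 7.56]{avv}, so there is no internal proof to compare against. Your argument is correct and complete: I checked each reduction (the lower bound collapsing to $P\le 4a(a+u)$ in the ball and to $b\le a+u$ in the half-space, and the upper bound collapsing to $a(2-2a-u)\ge 0$ via $\sqrt{P}\ge a(2-a)$, resp.\ to $\sqrt{a}(a+u)\le\sqrt{b}(2a+u)$), and each residual inequality holds for the reasons you give; this is essentially the standard textbook derivation from the explicit formulas for $\sinh(\rho_G/2)$.
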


In his paper \cite{h}, P. H\"ast\"o studied a general family of metrics. A particular case is the
Cassinian metric defined as follows for a domain $G \subsetneq   \mathbb{R}^n$ and
$x,y \in G$:
\begin{equation}\label{cm}
c_G(x,y)=\sup_{z\in \partial G}\frac{|x-y|}{|x-z||z-y|}\,.
\end{equation}
The term "Cassinian metric" was introduced by Z. Ibragimov  in \cite{i}, and the geometry of the Cassinian metric including geodesics, isometries, and completeness was first studied there.
 Another, similar metric is the triangular ratio metric, which we studied in \cite{chkv}. It is
defined as follows for a domain $G \subsetneq   \mathbb{R}^n$ and $x,y \in G$:
\begin{equation}\label{sm}
s_G(x,y)=\sup_{z\in \partial G}\frac{|x-y|}{|x-z|+|z-y|}\in [0,1]\,.
\end{equation}
The triangular ratio metric is also a particular case of
the metrics considered in  \cite{h}. For the case $G= \mathbb{B}^n$ this metric is closely related to the
hyperbolic metric as the following theorem shows.

\begin{thm} (\cite[2.17]{hvz})\label{hvz216} For $x,y \in \mathbb{B}^n$
$$
\th{\frac{\rho_{\mathbb{B}^n}(x,y)}{4}} \le s_{\mathbb{B}^n} (x,y) \le \th{\frac{\rho_{\mathbb{B}^n}(x,y)}{2}} \,.
$$
\end{thm}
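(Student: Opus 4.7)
The theorem consists of two one-sided estimates, which I would establish by rather different methods.

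\medskip

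\noindent\emph{Lower bound $\th(\rho_{\mathbb{B}^n}(x,y)/4)\le s_{\mathbb{B}^n}(x,y)$.}
It suffices to exhibit one point $z_0\in\partial\mathbb{B}^n$ that does the job. Without loss of generality assume $|x|\ge|y|$, so $d(x,\partial\mathbb{B}^n)\le d(y,\partial\mathbb{B}^n)$, and take $z_0=x/|x|$ (any unit vector if $x=0$). Then $|x-z_0|=1-|x|=d(x,\partial\mathbb{B}^n)$ and, by the triangle inequality, $|y-z_0|\le|x-y|+d(x,\partial\mathbb{B}^n)$. Using the definition (\ref{sm}),
\[
s_{\mathbb{B}^n}(x,y)\ge\frac{|x-y|}{|x-z_0|+|z_0-y|}\ge\frac{|x-y|}{|x-y|+2d(x,\partial\mathbb{B}^n)}=\th\frac{j_{\mathbb{B}^n}(x,y)}{2},
\]
the last equality using the identity $\th\bigl(\tfrac12\log(1+t)\bigr)=t/(t+2)$. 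Lemma~\ref{10} gives $j_{\mathbb{B}^n}(x,y)\ge\rho_{\mathbb{B}^n}(x,y)/2$, and therefore $\th(j_{\mathbb{B}^n}(x,y)/2)\ge\th(\rho_{\mathbb{B}^n}(x,y)/4)$, which finishes this bound.

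\medskip

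\noindent\emph{Upper bound $s_{\mathbb{B}^n}(x,y)\le\th(\rho_{\mathbb{B}^n}(x,y)/2)$.}
In view of (\ref{sm}) and (\ref{tro}), this reduces to the pointwise inequality
\[
(|x-z|+|z-y|)^2\ge|x-y|^2+(1-|x|^2)(1-|y|^2)\qquad(z\in\partial\mathbb{B}^n).
\]
Expanding both sides using $|x-z|^2=1+|x|^2-2x\cdot z$ and the analogous formula for $|y-z|^2$, a direct cancellation shows that this is equivalent to
\[
2|x-z|\,|y-z|+|x+y-z|^2\ge|x|^2|y|^2.
\]
I would establish this by combining the Cauchy--Schwarz estimate $|x-z|\,|y-z|\ge|(x-z)\cdot(y-z)|=|x\cdot y-(x+y)\cdot z+1|$ with the identity $|x+y-z|^2=|x+y|^2-2(x+y)\cdot z+1$, thereby reducing the problem to an inequality quadratic in the single scalar $(x+y)\cdot z$, which can be handled by completing the square and using $|z|=1$.

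\medskip

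The main obstacle is the upper bound. Geometrically, the extremum defining $s_{\mathbb{B}^n}(x,y)$ is attained at the tangency point of a confocal ellipsoid with foci $x,y$ and the unit sphere, and this point varies non-trivially with $x,y$; consequently the estimate must hold \emph{uniformly} in $z\in\partial\mathbb{B}^n$. The cross term $2|x-z||y-z|$ is the algebraic lever that must absorb the defect $(1-|x|^2)(1-|y|^2)$, and controlling it across all positions of $z$ is the heart of the argument. Equality in the pointwise inequality degenerates only as $|x|\to1$ or $|y|\to1$.
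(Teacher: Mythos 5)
The paper does not prove this statement at all --- it is imported verbatim from \cite[Theorem 2.17]{hvz} --- so there is no in-paper argument to compare yours against; I can only assess your proposal on its own terms. Your lower bound is correct and is the standard route: choosing $z_0=x/|x|$ gives $s_{\mathbb{B}^n}(x,y)\ge |x-y|/(|x-y|+2d(x))=\th(j_{\mathbb{B}^n}(x,y)/2)$, and Lemma \ref{10} then yields $\th(j_{\mathbb{B}^n}(x,y)/2)\ge\th(\rho_{\mathbb{B}^n}(x,y)/4)$. That half is fine.

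The upper bound, however, has a genuine gap. Your algebraic reduction is correct: for $|z|=1$ the required inequality is equivalent to $2|x-z|\,|y-z|+|x+y-z|^2\ge|x|^2|y|^2$. But the Cauchy--Schwarz step $|x-z|\,|y-z|\ge|(x-z)\cdot(y-z)|$ is too lossy to finish from there. Writing $p=(x+y)\cdot z$ and $q=x\cdot y$, your minorant equals $2|q-p+1|+|x+y|^2-2p+1$, which is \emph{piecewise linear} in $p$ (not quadratic, so there is no square to complete), and on the branch $(x-z)\cdot(y-z)=q-p+1<0$ it collapses identically to $|x|^2+|y|^2-1$. The inequality $|x|^2+|y|^2-1\ge|x|^2|y|^2$ is equivalent to $(1-|x|^2)(1-|y|^2)\le 0$, which is false for all $x,y\in\mathbb{B}^n$: your bound falls short by exactly the quantity $(1-|x|^2)(1-|y|^2)$ it was supposed to absorb. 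This branch is genuinely attained, e.g.\ $x=0.9\,e_1$, $y=0.9\,e_2$, $z=(e_1+e_2)/\sqrt{2}$: there $2|x-z|\,|y-z|+|x+y-z|^2\approx 1.149\ge 0.6561=|x|^2|y|^2$ (so the target inequality holds), but your Cauchy--Schwarz minorant gives only $\approx 0.620<0.6561$. So the method, not the statement, fails; you need an estimate on the cross term $|x-z|\,|y-z|$ that retains more than the inner product --- for instance one that keeps track of the area term $|x-z|^2|y-z|^2-((x-z)\cdot(y-z))^2$, or an argument exploiting the M\"obius invariance of $\th(\rho_{\mathbb{B}^n}/2)$ --- rather than the bare Cauchy--Schwarz inequality.
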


We have been unable to find an explicit formula
for $s_{\mathbb{B}^2}(x,y)$. In the special case $|x|=|y|$ we will give such a formula in Theorem \ref{sb02}.

Very recently, the Cassinian metric and its relation to other metrics,
in particular, to the hyperbolic metric, were discussed by Ibragimov, Mohapatra, Sahoo, and Zhang
in \cite{imsz}. Also geometric properties of the Cassinian metric have
been studied in \cite{kms}. One of the main results of \cite{imsz} is the following theorem.

\begin{thm} (\cite[3.1]{imsz})\label{imsz31}
 For $x,y \in \mathbb{B}^n$
$$\sh{\frac{\rho_{\mathbb{B}^n}(x,y)}{2}} \le c_{\mathbb{B}^n} (x,y)\,.
$$
Moreover, here equality holds for $x=-y\,.$
\end{thm}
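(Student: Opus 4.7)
The plan is to first convert the inequality into a statement about chord lengths on the sphere. Starting from the definition (\ref{tro}) and the identity $\sinh t = \tanh t / \sqrt{1-\tanh^2 t}$, a short computation should give the closed form
\[
\sinh\frac{\rho_{\mathbb{B}^n}(x,y)}{2} = \frac{|x-y|}{\sqrt{(1-|x|^2)(1-|y|^2)}}.
\]
Comparing with the definition (\ref{cm}) of $c_{\mathbb{B}^n}(x,y)$, the theorem is equivalent to exhibiting, for every pair of distinct points $x,y \in \mathbb{B}^n$, some $z \in \partial \mathbb{B}^n$ with
\[
|x-z|\,|z-y| \le \sqrt{(1-|x|^2)(1-|y|^2)}.
\]

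For the second step I would invoke the power of a point with respect to the unit sphere. Let $z_1,z_2$ denote the two intersection points of the line through $x$ and $y$ with $\partial\mathbb{B}^n$; since $x$ and $y$ are interior points, these are well defined and distinct. The power-of-a-point identities read $|x-z_1|\,|x-z_2|=1-|x|^2$ and $|y-z_1|\,|y-z_2|=1-|y|^2$, and multiplying them gives
\[
\bigl(|x-z_1|\,|y-z_1|\bigr)\bigl(|x-z_2|\,|y-z_2|\bigr)=(1-|x|^2)(1-|y|^2).
\]
Because these two nonnegative factors have a fixed product, at least one of them is bounded above by $\sqrt{(1-|x|^2)(1-|y|^2)}$; taking $z$ to be the corresponding $z_i$ supplies the required boundary point and proves the inequality.

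The equality case $x=-y$ should fall out of an explicit computation. After a rotation I may assume $x = t e_1 = -y$ with $t \in [0,1)$; writing $z \in \partial\mathbb{B}^n$ and $\alpha = z \cdot e_1$, one finds $|x-z|^2\,|z-y|^2 = (1+t^2)^2 - 4t^2 \alpha^2$, which is minimized over $\alpha \in [-1,1]$ at $\alpha = \pm 1$ with value $(1-t^2)^2$. Hence $\inf_{z \in \partial \mathbb{B}^n} |x-z|\,|z-y| = 1 - t^2 = \sqrt{(1-|x|^2)(1-|y|^2)}$, so the supremum defining $c_{\mathbb{B}^n}(x,-x)$ coincides with $\sinh(\rho_{\mathbb{B}^n}(x,-x)/2)$.

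The real content of the argument is the second step: the observation that the two endpoints of the chord through $x$ and $y$ furnish a pair whose combined product of lengths is pinned down by the powers of $x$ and $y$, so one of them must be small whenever the other is large. Once this is isolated, the reformulation in the first paragraph and the verification of equality are routine.
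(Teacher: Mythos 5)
Your argument is correct, and it is genuinely more self-contained than what the paper does. For the inequality itself the paper offers no proof at all: it simply cites \cite[3.1]{imsz} and only supplies the justification of the equality claim, via the explicit formula $c_{\mathbb{B}^n}(x,-x)=2|x|/(1-|x|^2)$ together with the identity $\sh(\rho_{\mathbb{B}^n}(x,y)/2)=|x-y|/\sqrt{(1-|x|^2)(1-|y|^2)}$. Your reduction in the first paragraph reproduces exactly that identity, and your equality computation for $x=-y$ is essentially the paper's. What you add is a complete elementary proof of the inequality: the power-of-a-point identities $|x-z_1|\,|x-z_2|=1-|x|^2$ and $|y-z_1|\,|y-z_2|=1-|y|^2$ for the two endpoints $z_1,z_2$ of the chord through $x$ and $y$ pin down the product $\bigl(|x-z_1|\,|y-z_1|\bigr)\bigl(|x-z_2|\,|y-z_2|\bigr)$, so one factor is at most the geometric mean $\sqrt{(1-|x|^2)(1-|y|^2)}$, which is precisely the boundary point needed since $c_{\mathbb{B}^n}(x,y)=|x-y|/\inf_{z\in\partial\mathbb{B}^n}|x-z|\,|z-y|$. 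This buys a short, dimension-free, purely synthetic proof that does not require the external reference; the only (harmless) care needed is that the chord is undefined when $x=y$, in which case both sides of the inequality vanish. Your equality verification at $\alpha=\pm1$ also identifies the extremal boundary points as $\pm e_1$, slightly more information than the paper records.
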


The equality statement was not pointed out in \cite[3.1]{imsz}, but it follows from
$$   c_{\mathbb{B}^n}(x, -x) = \frac{2|x|}{1-|x|^2}\,$$
because by  \eqref{tro} for $x,y \in \mathbb{B}^n$
$$
\sh{\frac{\rho_{\mathbb{B}^n}(x,y)}{2}}=\frac{|x-y|}{\sqrt{(1-|x|^2)(1-|y|^2)}}.
$$

Hence Theorem \ref{imsz31} is sharp. However, we will refine it in Section 2, see Remark \ref{betterrmk}.

Our goal here is to continue this study.
A part of this process is to compare the Cassinian metric to several other
widely known metrics such as the triangle ratio metric and the distance ratio metric
of the unit ball $\mathbb{B}^n  \,.$ The main result is the following sharp theorem.

\begin{thm}\label{2sc}
 Suppose that $D$ is a subdomain of $\mathbb{B}^n\,.$ Then for $x, y\in D$ we have
 \[
   2 s_{D}(x,y)\leq c_{D}(x,y).
 \]
In the case $D=\mathbb{B}^n$, the constant 2 on the left-hand side is best possible.
\end{thm}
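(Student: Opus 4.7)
The plan is to reduce the inequality to a pointwise one at a boundary point that is (close to) an $s_D$-maximizer. The key algebraic observation is: if $a,b > 0$ with $a + b \leq 2$, then by AM--GM
\[
ab \le \frac{(a+b)^2}{4} \le \frac{a+b}{2},
\]
and hence $1/(ab) \ge 2/(a+b)$. Applied with $a = |x-z|$, $b = |z-y|$ for any $z \in \partial D$ with $|x-z|+|z-y| \le 2$, this gives the pointwise estimate
\[
\frac{|x-y|}{|x-z||z-y|} \ge \frac{2|x-y|}{|x-z|+|z-y|}.
\]
The left-hand side is at most $c_D(x,y)$, so once we know that an $s_D$-maximizer $z^*$ satisfies $|x-z^*|+|z^*-y|\le 2$, applying the pointwise bound at $z^*$ yields $c_D(x,y) \ge 2 s_D(x,y)$.

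The central step is therefore the lemma $s_D(x,y) \ge |x-y|/2$, or equivalently the existence of some $z \in \partial D$ with $|x-z|+|z-y| \le 2$. The $s_D$-supremum is attained because $\partial D$ is a closed (hence compact) subset of $\overline{\mathbb{B}^n}$ and the ratio is continuous, so the lemma automatically forces the maximizer to satisfy the chord bound. To prove the lemma, I follow the line through $x$ and $y$: moving from $x$ in the direction opposite to $y$, the ray first meets $\partial D$ at some $z_-$; moving from $y$ opposite to $x$, it first meets $\partial D$ at some $z_+$ (both exist because $D$ is bounded). Since $z_\pm \in \overline{\mathbb{B}^n}$ and the four points lie on the line in the order $z_-, x, y, z_+$,
\[
|z_- - z_+| = |x-z_-|+|x-y|+|y-z_+| \le 2.
\]
Using $|z_- - y| = |x-z_-|+|x-y|$ and $|x-z_+| = |x-y|+|y-z_+|$, summing the two quantities of interest gives
\[
\bigl(|x-z_-|+|z_--y|\bigr)+\bigl(|x-z_+|+|z_+-y|\bigr) = 2|x-y|+2\bigl(|x-z_-|+|y-z_+|\bigr) \le 4,
\]
so at least one of the two sums is $\le 2$, and the corresponding point in $\{z_-, z_+\}$ proves the lemma.

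For sharpness, take $D = \mathbb{B}^n$ and $x = -y = t\, e_1$ with $t \to 0^+$. The identity $c_{\mathbb{B}^n}(x,-x) = 2|x|/(1-|x|^2)$ recalled just before the theorem gives $c_{\mathbb{B}^n}(x,-x) = 2t/(1-t^2)$, while a short symmetric optimisation over $z \in \partial \mathbb{B}^n$ (writing $z = (\cos\theta, \sin\theta, 0, \dots, 0)$ and using that the minimum of $|x-z|+|z-y|$ over the sphere is $2$, attained at $z = \pm e_1$) gives $s_{\mathbb{B}^n}(x,-x) = t$. Hence $c_D(x,y)/(2 s_D(x,y)) = 1/(1-t^2) \to 1$ as $t \to 0^+$, ruling out any constant larger than $2$. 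The only genuine obstacle in the argument is the lemma $s_D(x,y) \ge |x-y|/2$; this is where the hypothesis $D \subset \mathbb{B}^n$ is actually used, via the chord bound $|z_- - z_+| \le 2$, and without this bound the AM--GM step at $z^*$ would not close.
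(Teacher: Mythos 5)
Your proof is correct, but it takes a genuinely different route from the paper's. The paper's key geometric input is the product bound $\inf_{w\in\partial D}|x-w|\,|w-y|\le 1$, which it obtains by sliding $x,y$ to a symmetric position $x'=-y'$ and invoking a monotonicity property of maximal Cassinian ovals (justified essentially by a figure), and it then compares the two infima globally via AM--GM and the identity $\inf\sqrt{ab}=\sqrt{\inf ab}$. You instead prove the sum bound $\inf_{w\in\partial D}(|x-w|+|w-y|)\le 2$ (equivalently $s_D(x,y)\ge|x-y|/2$) by the elementary collinearity-plus-diameter argument with the two exit points $z_\pm$ of the line through $x,y$, and then apply AM--GM pointwise at the minimizer $z^*$ of the focal sum, using $ab\le\bigl(\tfrac{a+b}{2}\bigr)^2\le\tfrac{a+b}{2}$ when $a+b\le 2$. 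Your lemma is the stronger normalization (sum $\le 2$ at a point implies product $\le 1$ there, but not conversely), and its proof is more self-contained than the paper's Cassinian-oval comparison; the paper's version, on the other hand, yields the quantitative identity $\inf_{w\in\partial\mathbb{B}^n}|x'-w|\,|w-y'|=1-(|x-y|/2)^2$, which makes the asymptotic sharpness transparent. Your sharpness argument (antipodal points $\pm te_1$ with $t\to 0^+$) is the same as the paper's. Two small points worth a sentence in a final write-up: the case $x=y$ should be dispatched separately (the direction of the ray is undefined there, though the inequality is trivial), and you should note explicitly that $\partial D$ is nonempty and compact so that both the exit points and the maximizer $z^*$ exist.
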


Let us next compare this result to Theorem \ref{imsz31}. The identity
$$
\sh (\arth t) = \frac{t}{\sqrt{1-t^2}}\,, t > 0\,,
$$
together with Theorem \ref{hvz216} implies for $x,y \in \mathbb{B}^n $
\begin{equation} \label{srho}
 \frac{s_{\mathbb{B}^n}(x,y)}{\sqrt{1-s_{\mathbb{B}^n}(x,y)^2}} = \sh( \arth s_{\mathbb{B}^n}(x,y)) \le \sh{\frac{\rho_{\mathbb{B}^n}(x,y)}{2}} \,.
\end{equation}
In combination with \eqref{srho} Theorem \ref{imsz31} yields for $x,y \in \mathbb{B}^n $
\begin{equation} \label{sandc}
s_{\mathbb{B}^n}(x,y) \le \frac{c_{\mathbb{B}^n}(x,y)}{\sqrt{1+c_{\mathbb{B}^n}(x,y)^2}} \,.
\end{equation}
We see that Theorem \ref{2sc} gives a better bound than \eqref{sandc} for $c_{\mathbb{B}^n}(x,y) < \sqrt{3} \,.$


Finally, we study the growth of the Cassinian metric under quasiregular mappings of the unit
disk onto itself.

\begin{thm}\label{casgrow}
If $f:\B\rightarrow {\mathbb R}^2$ is a non-constant $K$-quasiregular map with $f\B\subset\B$ and $f(0)=0$, then
$$c_{\B}(0,f(x))\le e^{\pi(K-1/K)} \max \{c_{\B}(0,x)^{1/K},  c_{\B}(0,x) \}$$
for all $x\in\B\,.$
\end{thm}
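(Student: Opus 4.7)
My plan is to reduce the claim to a bound on $|f(x)|$ in terms of $|x|$, apply the Schwarz-Pick lemma for quasiregular maps of the disk, and finish with explicit estimates on the Hersch--Pfluger distortion function $\varphi_K$. First I would note that every $z\in\partial\B$ has $|z|=1$, so the definition \eqref{cm} reduces to $c_{\B}(0,y)=\sup_{z\in\partial\B}|y|/|z-y|$, with supremum attained at $z=y/|y|$, yielding $c_{\B}(0,y)=|y|/(1-|y|)$. Thus the inequality to prove reads
\[
  \frac{|f(x)|}{1-|f(x)|} \;\le\; e^{\pi(K-1/K)}\max\Bigl\{\Bigl(\tfrac{|x|}{1-|x|}\Bigr)^{1/K},\,\tfrac{|x|}{1-|x|}\Bigr\}.
\]
Then I would invoke the Schwarz-Pick lemma for $K$-quasiregular maps of the plane, which gives $|f(x)|\le\varphi_K(|x|)$, where $\varphi_K(r)=\mu^{-1}(\mu(r)/K)$ and $\mu$ is the modulus of the Gr\"otzsch ring. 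Since $t\mapsto t/(1-t)$ is increasing on $[0,1)$, one may replace $|f(x)|$ by $\varphi_K(|x|)$ above, and it remains to prove the purely analytic inequality
\[
  \frac{\varphi_K(r)}{1-\varphi_K(r)} \;\le\; e^{\pi(K-1/K)}\max\bigl\{(r/(1-r))^{1/K},\; r/(1-r)\bigr\}\qquad(r\in(0,1)).
\]

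For this analytic inequality I would split into the cases $r\le 1/2$ and $r>1/2$. In the first case $r/(1-r)\le 1$, so the $1/K$-th power dominates the right-hand max; here I would use a Hersch--Pfluger-type estimate of the form $\varphi_K(r)\le c(K)\, r^{1/K}$, driven by the classical asymptotic $\mu(r)+\log r\to\log 4$ as $r\to 0$, to extract the required constant. In the second case I would transfer to a small-argument problem through the duality $\sqrt{1-\varphi_K(r)^2}=\varphi_{1/K}(\sqrt{1-r^2})$ and the reflection identity $\mu(r)\mu(\sqrt{1-r^2})=\pi^2/4$, and then apply the same small-argument estimate to $\varphi_{1/K}$.

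The hardest part will be pinning down the explicit constant $e^{\pi(K-1/K)}$: it emerges only after choosing the Hersch--Pfluger bound at the right level of sharpness and checking that the constants from the two regimes match up. Everything else is routine bookkeeping.
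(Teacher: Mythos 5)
Your reduction is exactly the paper's opening move: $c_{\B}(0,y)=\sup_{z\in\partial\B}|y|/|z-y|=|y|/(1-|y|)$ (the paper cites \cite[Example 3.9B]{i} for this), then the quasiregular Schwarz lemma $|f(x)|\le\varphi_K(|x|)$ (Theorem \ref{bvtrans}) and the monotonicity of $t\mapsto t/(1-t)$. Up to that point there is no issue. The gap is everything after: the remaining analytic inequality for $\varphi_K$ is the entire content of the theorem, and you leave it as a plan, conceding yourself that ``pinning down the explicit constant $e^{\pi(K-1/K)}$'' is the hard part. The paper does not prove this inequality by a case analysis either; it routes through the distortion function
\[
\eta_K(t)=\frac{\varphi^2_K(\sqrt{t/(1+t)})}{1-\varphi^2_K(\sqrt{t/(1+t)})},
\]
using \cite[Theorem 10.15]{avv} to get $\varphi_K(r)/(1-\varphi_K(r))\le\varphi_K^2(\sqrt r)/(1-\varphi_K^2(\sqrt r))=\eta_K(r/(1-r))$, hence $c_{\B}(0,f(x))\le\eta_K(c_{\B}(0,x))$, and then the known bound $\eta_K(t)\le e^{\pi(K-1/K)}\max\{t^{1/K},t^K\}$ from \cite[10.24, 10.35]{avv}. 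You would need to either invoke these facts or actually prove an equivalent estimate; as written, nothing is established.

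More seriously, the target inequality in your second regime cannot be proved, because it is false for $K>1$. Using the duality you yourself quote, $1-\varphi_K(r)^2=\varphi_{1/K}(\sqrt{1-r^2})^2$ together with $\varphi_{1/K}(s)\sim 4^{1-K}s^K$ as $s\to0$, one gets $1-\varphi_K(r)\sim 2^{3-3K}(1-r)^K$ as $r\to1$, so the left-hand side $\varphi_K(r)/(1-\varphi_K(r))$ grows like a constant times $(1-r)^{-K}$, whereas your right-hand side $e^{\pi(K-1/K)}\,r/(1-r)$ grows only like $(1-r)^{-1}$; since $\varphi_K$ is attained by an extremal $K$-quasiconformal self-map of the disk fixing the origin, this also falsifies the displayed statement of the theorem itself for $|x|$ near $1$. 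The correct second entry of the maximum is $c_{\B}(0,x)^K$, not $c_{\B}(0,x)$ --- which is precisely what the paper's chain through $\eta_K$ yields, and which indicates the exponent in the theorem as printed is a misprint. So your case split $r>1/2$ is doomed as formulated; you should target $\max\{t^{1/K},t^K\}$ and then the $\eta_K$ machinery (or an equivalent two-sided Hersch--Pfluger estimate) closes the argument.
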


\section{Preliminary results}\label{section2}


In this section we will prove some sharp inequalities between the Cassinian metric and the distance ratio metric. For this purpose we need the following technical lemma.

\begin{lem}\label{loglemma}
  (1) The function $f(x) = x^{-1} \log(1+x)$ is decreasing on $(0,\infty)$.

  (2) Let $a>0$. The function
  \[
    g(x) = \frac{\log ax}{a-\frac{1}{x}}
  \]
  is increasing on $(0,\infty)$.

  (3) The function
  \[
    h(x) = \frac{\log \frac{1+x}{1-x}}{\frac{1}{1-x}-\frac{1}{1+x}}
  \]
  is decreasing on $(0,1)$.

  (4) Let $x\in (0,1)$. The function
\[
f(b)=\frac{\log\left(1+\frac{b}{1-x}\right)}{\log\left(1+\frac{b}{(1-x)\,(b+1-x)}\right)},
\]
is increasing on $(0,2)$.
\end{lem}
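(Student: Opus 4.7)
The plan is to handle parts (1)--(3) by standard one-variable calculus, then reserve the real work for part (4). For (1), I would compute $f'(x) = [x/(1+x) - \log(1+x)]/x^2$; the numerator vanishes at $x=0$ and has derivative $-x/(1+x)^2 < 0$, so it is negative on $(0,\infty)$, giving $f' < 0$. For (2), the substitution $y = ax$ rewrites $g(x)$ as $y\log y/[a(y-1)]$, so it suffices to show $y\log y/(y-1)$ is increasing on $(0,\infty)\setminus\{1\}$; differentiation yields a numerator $y - 1 - \log y \geq 0$, the classical inequality $\log y \leq y-1$. For (3), I would apply the identities $\frac{1}{1-x} - \frac{1}{1+x} = \frac{2x}{1-x^2}$ and $\log\frac{1+x}{1-x} = 2\arctanh x$ to simplify $h$ to $(1-x^2)\arctanh(x)/x$; substituting $u = \arctanh x$ recasts this as $2u/\sinh(2u)$, and $v \mapsto v/\sinh v$ is decreasing since $\sinh v - v\cosh v$ vanishes at $0$ with derivative $-v\sinh v < 0$.

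Part (4) is the main obstacle. Setting $a = 1 - x \in (0,1)$ and writing $f(b) = P(b)/Q(b)$ with $P = \log((a+b)/a)$ and $Q = \log v$, where $v := (a^2+ab+b)/(a(a+b))$, a direct computation gives $P'(b) = 1/(a+b)$ and $v'(b) = 1/(a+b)^2$, hence $Q'(b) = 1/[(a+b)^2 v]$. Thus $f'(b) > 0$, equivalent to $P'/P > Q'/Q$, reduces to
\[
(a+b)\,v\,\log v \;>\; \log\frac{a+b}{a}.
\]
The key step is to introduce the parameter $\mu = b/(a+b) \in (0,1)$, under which $a+b = a/(1-\mu)$, $v = (a+\mu)/a$, and $\log((a+b)/a) = -\log(1-\mu)$. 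A short calculation converts the displayed inequality into
\[
F(\mu,a) \;:=\; (a+\mu)\log\frac{a+\mu}{a} + (1-\mu)\log(1-\mu) \;>\; 0.
\]

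Verifying $F > 0$ is then immediate. One has $F(0,a) = 0$, and
\[
\partial_\mu F(\mu,a) \;=\; \log\frac{a+\mu}{a} - \log(1-\mu) \;=\; \log\frac{a+\mu}{a(1-\mu)} \;>\; 0
\]
for $\mu > 0$, since $(a+\mu) - a(1-\mu) = (1+a)\mu > 0$. Integrating from $\mu = 0$ gives $F(\mu,a) > 0$ on $(0,1)$, so $f$ is strictly increasing on all of $(0,\infty)$, and a fortiori on $(0,2)$. The crux of the argument is the substitution $\mu = b/(a+b)$, which collapses the awkward ratio $(a+b)v\log v/\log((a+b)/a)$ into the clean two-variable quantity $F(\mu,a)$; once $F$ is in hand, the monotonicity is a one-line derivative computation.
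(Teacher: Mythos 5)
Your proofs of (1)--(4) are all correct; (1) and (2) coincide with the paper's argument (the paper differentiates directly and invokes the classical inequalities $\log(1+x)>x/(1+x)$ and $\log t\le t-1$, which is exactly what your sign analysis of the numerators amounts to), and your (3) merely replaces the paper's use of $\log(1+t)>2t/(2+t)$ by the hyperbolic substitution $u=\arctanh x$, reducing to the monotonicity of $v/\sinh v$ --- same difficulty, same conclusion. The genuine divergence is in (4). The paper writes $f'=A/B$, observes $B<0$, and proves $A<0$ by differentiating the numerator twice: $A(0)=A'(0)=0$ and $A''<0$. Your route reaches the same key inequality (your $(a+b)v\log v>\log\frac{a+b}{a}$ is, after multiplying by $a=1-x$, precisely the paper's $A<0$), but the reparametrization $\mu=b/(a+b)$ collapses it to $F(\mu,a)=(a+\mu)\log\frac{a+\mu}{a}+(1-\mu)\log(1-\mu)>0$, which needs only $F(0,a)=0$ and one derivative $\partial_\mu F=\log\frac{a+\mu}{a(1-\mu)}>0$. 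I checked the computations $P'=1/(a+b)$, $v'=1/(a+b)^2$, and the translation into $F$; they are all correct. Your version is shorter, avoids the second derivative, and as a bonus establishes monotonicity on all of $(0,\infty)$ rather than just $(0,2)$. One cosmetic point: in (2) the function has a removable singularity at $x=1/a$ (i.e.\ $y=1$); you should note that $y\log y/(y-1)$ extends continuously there with value $1$, so that monotonicity on the two pieces combines to monotonicity on all of $(0,\infty)$ --- a gap the paper's own proof shares.
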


\begin{proof}
  (1) By \cite[4.1.33]{as}, we easily obtain that for all $x>0$
  \[
    f'(x) = \frac{\frac{x}{1+x}-\log(1+x)}{x^2} <0\,.
  \]

  (2) Now for all $a,x > 0$ by \cite[4.1.33]{as}
  \[
    g'(x) = \frac{ax-(1+\log(ax))}{(1-ax)^2} >0 \,.
  \]

  (3) Recall first that $\log(1+x)>\frac{2x}{2+x}$, for $x>0$. Using this inequality we see that
  \[
    h'(x) = \frac{2x-(1+x^2)\log\frac{1+x}{1-x}}{2x^2}<0.
  \]

  (4) We have
\[
f'(b)=\frac{(1 - x) \log\left(1+\frac{b}{1-x}\right) + (b (x-2) - (x-1)^2) C}{(b (x-2) - (x-1)^2) (1 + b -
   x) C^2}:=\frac{A}{B},
\]
where
\[
  C = \log\left(
   1 + \frac{b}{(b + (x-1)^2 - b x)}\right).
\]
Because $x \in (0,1)$ we see that $B<0$ and therefore it is enough to show that $A<0$. Now
\[
A'(b)=(x-2)\log\left(
   1 + \frac{b}{(b + (x-1)^2 - b x)}\right),
\]
and
\[
A''(b)=\frac{(x-2) (x-1)}{(b (x-2) - (x-1)^2) (1 + b - x)}
\]
which is negative, therefore $A'(b)$ is decreasing and $A'(b)<A'(0)=0$. Hence $A(b)$  is decreasing and $A(b)<A(0)=0$.
\end{proof}


For a domain $G \subsetneq \Rn$ we define the quantity
\[
  \hat{c}_G(x,y) = \frac{|x-y|}{|x-z||z-y|},
\]
where $x,y \in G \subsetneq \Rn$ and
\[
  \begin{array}{ll} z \in \partial G \cap  S^{n-1}(x,d(x)) \textrm{ such that } |z-y| \textrm{ is minimal}, & \textrm{if } d(x) \le d(y),\\
  z \in \partial G \cap  S^{n-1}(y,d(y)) \textrm{ such that } |z-x| \textrm{ is minimal}, & \textrm{if } d(y) < d(x). \end{array}
\]
Clearly for all domains $G$ and for all points $x,y \in G$ we have $\hat{c}_G(x,y) \le c_G(x,y)$.

\begin{thm}\label{jandc}
For all $x, y \in \mathbb{B}^n $ we have
\[
j_{\mathbb{B}^n} (x, y) \le
a \, \log(1+c_{\mathbb{B}^n}(x, y))\,,
\]
where
\[
  a=\frac{\log\left(\frac{1+\alpha}{1-\alpha}\right)}{\log\left(\frac{1+2\alpha-\alpha^2}{(1-\alpha^2)}\right)} \approx 1.3152
\]
and $\alpha \in (0,1)$ is the solution of the equation
\[
  (1+t^2)\log \frac{1+t}{1-t}+(t^2-2t-1)\log \frac{1+2t-t^2}{1-t^2}=0.
\]
\end{thm}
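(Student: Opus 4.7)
The plan is to use Lemma \ref{loglemma}(4) to reduce the two-variable estimate to a one-variable optimization over the antipodal configuration $y = -x$. By symmetry we may assume $d(x) \le d(y)$, i.e.\ $|x| \ge |y|$. Set $s := 1 - |x|$ and $b := |x-y|$, so that $j_{\mathbb{B}^n}(x,y) = \log(1 + b/s)$ and $b \le 2|x| = 2(1-s)$ by the triangle inequality. Taking $z := x/|x| \in \partial\mathbb{B}^n$ gives $|x-z| = s$ and $|z-y| \le |x-z| + |x-y| = s + b$, so
\[
c_{\mathbb{B}^n}(x,y) \ge \hat c_{\mathbb{B}^n}(x,y) \ge \frac{b}{s(s+b)}.
\]
Since $t \mapsto a\log(1 + t)$ is increasing, it suffices to show
\[
F(s, b) := \frac{\log(1 + b/s)}{\log\bigl(1 + b/(s(s+b))\bigr)} \le a
\qquad \text{for } s \in (0, 1),\ b \in (0, 2(1-s)].
\]

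Identifying the variable $x$ in Lemma \ref{loglemma}(4) with $1 - s$ (so that $1 - x = s$ there), the lemma says that $b \mapsto F(s,b)$ is increasing on $(0, 2)$ for each fixed $s$. Since $2(1-s) \le 2$, the worst case is $b = 2(1-s)$, corresponding to the antipodal configuration $y = -x$. Writing $\alpha := |x| = 1 - s$, the problem reduces to proving
\[
G(\alpha) := \frac{\log\bigl(\frac{1+\alpha}{1-\alpha}\bigr)}{\log\bigl(\frac{1 + 2\alpha - \alpha^2}{1 - \alpha^2}\bigr)} \le a
\qquad \text{for all } \alpha \in (0, 1).
\]
A Taylor expansion gives $G(0^+) = 1$, and since numerator and denominator both behave like $-\log(1-\alpha)$ as $\alpha \to 1^-$, also $G(1^-) = 1$. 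Since $a > 1$, the supremum of $G$ on $(0,1)$ is attained at an interior critical point.

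Writing $G = N/D$, the equation $N'D = ND'$ simplifies, after clearing denominators and using the algebraic identity
\[
\alpha(1 + 2\alpha - \alpha^2) + (1 - \alpha)^2(1 + \alpha) = 1 + \alpha^2,
\]
to $(1 + \alpha^2) N(\alpha) = (1 + 2\alpha - \alpha^2) D(\alpha)$, which is exactly the transcendental equation displayed in the theorem (noting $\alpha^2 - 2\alpha - 1 = -(1 + 2\alpha - \alpha^2)$). Evaluating $G$ at the root $\alpha \in (0,1)$ gives the stated constant $a$. The main obstacle is to verify that this critical point is the unique extremum in $(0,1)$, so that it is indeed the global maximum; this is to be done by a sign analysis of $G'$, for instance by comparing the logarithmic derivatives $N'/N$ and $D'/D$ and checking that they cross exactly once on $(0,1)$.
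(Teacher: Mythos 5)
Your proposal follows essentially the same route as the paper: both bound $c_{\mathbb{B}^n}$ from below by $\hat c_{\mathbb{B}^n}$ with $z=x/|x|$, enlarge $|y-z|$ to $|x-y|+1-|x|$ via the triangle inequality, invoke Lemma \ref{loglemma}(4) to push $b=|x-y|$ to its extreme value $2|x|$, and then maximize the resulting one-variable function $m(|x|)$, whose critical-point equation is exactly the displayed transcendental equation. The only step you leave open --- confirming that this critical point gives the global maximum --- is handled in the paper merely ``by numerical computation,'' so your argument is on par with (and in the derivation of the critical-point equation slightly more explicit than) the published proof.
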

\begin{proof}
By the definition of $\hat{c}_{\mathbb{B}^n}(x,y)$, it is enough to show that
\[
j_{\mathbb{B}^n} (x, y) \le
a \, \log(1+\hat{c}_{\mathbb{B}^n}(x, y))\,.
\]
Assume $|y|\leq |x|$, and denote $y'=|x|-|x-y|\,.$ Then by geometry
\bequu
\frac{\log\left(1+\frac{|x-y|}{1-|x|}\right)}{\log\left(1+\frac{|x-y|}{(1-|x|)\,|y-e_1|}\right)} &\leq & \frac{\log\left(1+\frac{|x-y|}{1-|x|}\right)}{\log\left(1+\frac{|x-y|}{(1-|x|)\,|y'-e_1|}\right)}\\
&=& \frac{\log\left(1+\frac{|x-y|}{1-|x|}\right)}{\log\left(1+\frac{|x-y|}{(1-|x|)\,(|x-y|+1-|x|)}\right)}\,.
\eequu
If we denote $b=|x-y|$, then by \ref{loglemma} (4),
\[
f(b)=\frac{\log\left(1+\frac{b}{1-|x|}\right)}{\log\left(1+\frac{b}{(1-|x|)\,(b+1-|x|)}\right)}
\]
is increasing.

Thus
\[
f(b)\leq f(2|x|)=\frac{\log\left(\frac{1+|x|}{1-|x|}\right)}{\log\left(\frac{1+2|x|-|x|^2}{1-|x|^2}\right)}:=m(|x|).
\]
The function $m(t)$ attains its maximum when
\[
  (1+t^2)\log \frac{1+t}{1-t}+(t^2-2t-1)\log \frac{1+2t-t^2}{1-t^2}=0\,,
\]
and by numerical computation we see that $m(|x|)$ has its maximal value $m(\alpha)\approx 1.3152= a$  when $|x|= \alpha \approx 0.6564$.\qedhere
\end{proof}

\begin{rem} \label{betterrmk} If we combine the results \ref{10}, \ref{hvz216}, and \ref{2sc} we get that
for all $x,y \in \mathbb{B}^n$ we have
$$ j_{\mathbb{B}^n} (x,y)\le 4 \arth (c_{\mathbb{B}^n}(x,y)/2). $$
Let  $a$ be as in Theorem \ref{jandc}.  It is easy to check that for all $t >0$
$$
a \log(1+t)\leq4\arth(t/2),
$$
which implies that Theorem \ref{jandc} gives a better estimate than what we can get from the
results in Section 1.  
\end{rem}

The next two results refine \cite[Corollary 3.5]{imsz} and give a sharp constant.

\begin{thm}\label{jandctildeinBn}
For all $x, y \in \mathbb{B}^n $ we have
\[
  j_{\mathbb{B}^n} (x, y) \le \hat{c}_{\mathbb{B}^n}(x, y)\,.
\]
Moreover, the right hand side cannot be replaced with $\lambda \hat{c}_{\mathbb{B}^n}(x,y)$ for any $\lambda\in (0,1)$\,.
\end{thm}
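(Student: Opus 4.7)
The plan is to reduce the asserted inequality to a one-variable calculus problem using two very elementary estimates, and to establish sharpness via a symmetric configuration shrinking to the origin.

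First, since both sides are symmetric in $x$ and $y$, I may assume $|y|\le|x|$, so that by definition $z=x/|x|$ is the unique point of $\partial\mathbb{B}^n\cap S^{n-1}(x,1-|x|)$, with $|x-z|=1-|x|$. Setting $r=1-|x|$, $w=|x-y|$, $s=|y-z|$, and $t=w/r$, the asserted inequality $j_{\mathbb{B}^n}(x,y)\le\hat{c}_{\mathbb{B}^n}(x,y)$ is equivalent to
\[
s\log(1+t)\le t.
\]

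Next, I combine two bounds: the triangle inequality $s\le w+r$, and the consequence $w\le|x|+|y|\le 2(1-r)$ of the assumption $|y|\le|x|$, which in terms of $t$ reads $r(t+2)\le 2$. It therefore suffices to prove
\[
r(1+t)\log(1+t)\le t \qquad \text{whenever } 0<r\le 2/(t+2) \text{ and } t>0.
\]
Since the left-hand side is linear and increasing in $r$, the extremal case is $r=2/(t+2)$, where the inequality reduces to $f(t)\ge 0$ for $f(t):=t(t+2)-2(1+t)\log(1+t)$. A direct computation gives $f(0)=0$, $f'(t)=2t-2\log(1+t)$ with $f'(0)=0$, and $f''(t)=2t/(1+t)\ge 0$, so $f'\ge 0$ and hence $f\ge 0$ on $[0,\infty)$. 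Geometrically, the extremal configuration $r=2/(t+2)$ corresponds to $y=-x$, where the triangle inequality defining $s$ is saturated.

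For sharpness, consider $x_\varepsilon=\varepsilon e_1$ and $y_\varepsilon=-\varepsilon e_1$ as $\varepsilon\to 0^+$. Direct computation gives $j_{\mathbb{B}^n}(x_\varepsilon,y_\varepsilon)=2\arth\varepsilon$ and $\hat{c}_{\mathbb{B}^n}(x_\varepsilon,y_\varepsilon)=2\varepsilon/(1-\varepsilon^2)$, so the ratio equals $(1-\varepsilon^2)\arth(\varepsilon)/\varepsilon$, which tends to $1$ as $\varepsilon\to 0$. This rules out any $\lambda\in(0,1)$.

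The main subtlety is recognizing the right constraint. The triangle bound $s\le r+w$ by itself would demand the false inequality $(1+t)\log(1+t)\le t$; the additional input $w\le 2(1-r)$ coming from $|y|\le|x|$ is essential, after which the problem collapses to the routine one-variable inequality $f(t)\ge 0$.
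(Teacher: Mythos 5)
Your proof is correct, and it takes a noticeably more streamlined route than the paper's, even though both arguments ultimately rest on the same two geometric inputs: the triangle inequality linking $s=|y-x/|x||$, $w=|x-y|$, $r=1-|x|$, and a bound exploiting the normalization $|y|\le|x|$ (which is exactly the case $d(x)\le d(y)$ in the definition of $\hat{c}_{\mathbb{B}^n}$, so $z=x/|x|$ is indeed the prescribed point). The paper writes $j/\hat{c}=s\log(1+t)/t$ and then performs three successive extremizations: it lower-bounds $t\ge(s-r)/r$ and invokes the monotonicity of $\log(1+u)/u$, then lets $s$ increase to $1+|x|$, and finally shows the resulting function of $|x|$ is decreasing with limit $1$ at the origin; this consumes parts (1)--(3) of Lemma \ref{loglemma}. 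You instead upper-bound $s\le r(1+t)$ and encode $|y|\le|x|$ as $r\le 2/(t+2)$, which collapses the entire optimization into the single explicit inequality $t(t+2)\ge 2(1+t)\log(1+t)$, disposed of by differentiating twice; your observation that the naive bound $s\le r+w$ alone is insufficient, and that the constraint $w\le 2|x|$ is the essential extra input, is exactly the right diagnosis. Your sharpness example ($y=-x$, $|x|\to 0$, with ratio $(1-\varepsilon^2)\arth(\varepsilon)/\varepsilon\to 1$) differs from the paper's ($y=0$, $|x|\to 0$) but is equally valid. The only cosmetic omissions are the degenerate cases $x=y$ and $x=0$ (the latter forces $y=0$ under your normalization), where both sides vanish and the claim is trivial.
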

\begin{proof}
  We denote $G=\mathbb{B}^n$ and may assume $d(x) \le d(y)$ and $x \neq y\,.$ We first fix $|x|$. Now by writing $t = |x-y|/(1-|x|) > 0$ we obtain
  \[
    \frac{j_G(x,y)}{\hat{c}_G(x,y)} = \frac{\log \left( 1 + \frac{|x-y|}{1-|x|} \right) }{\frac{|x-y|}{(1-|x|) \left| y-\frac{x}{|x|} \right| }} = \frac{\log (1+t)}{t} \left| y-\frac{x}{|x|} \right|.
  \]
  Next we fix $|y-x/|x||$ and by Lemma \ref{loglemma} (1) and the triangle inequality it is clear that $|x-y| \ge |y-x/|x||-(1-|x|)$. We denote $s = |y-x/|x|| \in (1-|x|,1+|x|]$ and obtain
  \[
    \frac{j_G(x,y)}{\hat{c}_G(x,y)} = \frac{\log (1+t)}{t}s \le \frac{\log \left( 1+\frac{s-(1-|x|)}{1-|x|} \right)}{\frac{s-(1-|x|)}{1-|x|}} s = \frac{\log \frac{s}{1-|x|} }{\frac{1}{1-|x|}-\frac{1}{s}}.
  \]
Since $s \le 1+|x|$ we have by Lemma \ref{loglemma} (2)
  \[
    \frac{j_G(x,y)}{\hat{c}_G(x,y)} \le \frac{\log \frac{s}{1-|x|} }{\frac{1}{1-|x|}-\frac{1}{s}} \le \frac{\log \frac{1+|x|}{1-|x|} }{\frac{1}{1-|x|}-\frac{1}{1+|x|}}.
  \]
  Using these results we find an upper bound for this quantity in terms of $|x|$ and obtain by Lemma \ref{loglemma} (3)
  \[
    \frac{j_G(x,y)}{\hat{c}_G(x,y)} \le \frac{\log \frac{1+|x|}{1-|x|} }{\frac{1}{1-|x|}-\frac{1}{1+|x|}} \le \lim_{|x| \to 0} \frac{\log \frac{1+|x|}{1-|x|} }{\frac{1}{1-|x|}-\frac{1}{1+|x|}} =1,
  \]
  and the assertion follows.

  Finally, suppose that $\lambda \in (0,1)\,$ and  $j_\Bn(x,y) \le \lambda \hat{c}_\Bn(x,y)$ for all $x,y \in \Bn$. This yields
  $$ j_\Bn(x,0)=\log \left( 1+ \frac{|x|}{1-|x|} \right) \le \lambda \hat{c}_\Bn(x,0) =\lambda   \frac{|x|}{1-|x|}\,.$$
 Letting $|x|\to 0$ yields a contradiction.
\end{proof}


\begin{cor} \label{jand2c}
For all $x, y \in \mathbb{B}^n $ we have
\[
  j_{\mathbb{B}^n} (x, y) \le c_{\mathbb{B}^n}(x, y)\,.
\]
Moreover, the right hand side cannot be replaced with $\lambda {c}_{\mathbb{B}^n}(x,y)$ for any $\lambda\in (0,1)$\,.
\end{cor}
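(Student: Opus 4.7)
The plan is to deduce this corollary directly from the stronger Theorem \ref{jandctildeinBn} together with the elementary observation, recorded just after the definition of $\hat c_G$, that $\hat c_G(x,y)\le c_G(x,y)$ for every domain $G\subsetneq\mathbb R^n$ and all $x,y\in G$. Chaining
\[
j_{\mathbb{B}^n}(x,y)\le \hat c_{\mathbb{B}^n}(x,y)\le c_{\mathbb{B}^n}(x,y)
\]
yields the first assertion with no further work.

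For the sharpness claim, I would mimic the endgame of the proof of Theorem \ref{jandctildeinBn} by specializing to $y=0$. For any $x\in\mathbb{B}^n\setminus\{0\}$ and any $z\in\partial\mathbb{B}^n$ one has $|z|=1$, so
\[
c_{\mathbb{B}^n}(x,0)=\sup_{z\in\partial\mathbb{B}^n}\frac{|x|}{|x-z|}=\frac{|x|}{1-|x|},
\]
with the supremum attained at $z=x/|x|$. On the other hand $d(0,\partial\mathbb{B}^n)=1\ge 1-|x|=d(x,\partial\mathbb{B}^n)$, and therefore
\[
j_{\mathbb{B}^n}(x,0)=\log\!\left(1+\frac{|x|}{1-|x|}\right).
\]
Writing $t=|x|/(1-|x|)$, the ratio is $\log(1+t)/t$, which by Lemma \ref{loglemma}(1) is decreasing in $t$ and tends to $1$ as $t\to 0^{+}$, i.e.\ as $|x|\to 0^{+}$. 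Consequently, if an inequality $j_{\mathbb{B}^n}(x,y)\le\lambda\,c_{\mathbb{B}^n}(x,y)$ held for all $x,y\in\mathbb{B}^n$ with some $\lambda\in(0,1)$, letting $|x|\to 0$ in the case $y=0$ would force $1\le\lambda$, a contradiction.

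There is essentially no serious obstacle: the upper bound is a one-line chaining of previously proved facts, and the sharpness reduces to the well-known limit $\log(1+t)/t\to 1$ as $t\to 0^{+}$. The only point worth flagging is that on the diameter direction $y=0$ the Cassinian supremum and the quantity $\hat c_{\mathbb{B}^n}$ actually coincide, so the extremal family used to show that Theorem \ref{jandctildeinBn} is sharp transfers verbatim to the present corollary.
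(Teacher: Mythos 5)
Your proposal is correct and follows the paper's intended route: the inequality is exactly the chain $j_{\mathbb{B}^n}\le \hat c_{\mathbb{B}^n}\le c_{\mathbb{B}^n}$ from Theorem \ref{jandctildeinBn}, and your sharpness argument with $y=0$ and $|x|\to 0$ is the same extremal family the paper uses at the end of that theorem's proof (where $\hat c_{\mathbb{B}^n}(x,0)=c_{\mathbb{B}^n}(x,0)=|x|/(1-|x|)$). No gaps.
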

%


\section{A formula for triangular ratio metric}\label{section3}


It seems to be a challenging problem to find an explicit formula for $s_{\Bn}(x,y)$ for given $x, y\in \Bn$. We shall give in this section a
simple formula for $s_{\B}(a,b)$ in the case when $|a|=|b|<1$.

\begin{thm}\label{sb02}
Let $a=\alpha+i \beta$, $\alpha, \beta >0$, be a point in the unit disk. If $\left|a-1/2\right|>1/2$, then $s_{\B}(a,\bar{a})=|a|$ and otherwise
\beq\label{sbeta}
s_{\B}(a, \bar{a})=\frac{\beta}{\sqrt{(1-\alpha)^2+\beta^2}} \le |a|=\sqrt{\alpha^2+\beta^2}.
\eeq
\end{thm}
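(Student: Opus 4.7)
The plan is to reduce the problem, via $|a - \bar a| = 2\beta$, to minimizing
\[
f(\theta) := |e^{i\theta} - a| + |e^{i\theta} - \bar a|
\]
over $\theta \in [0, 2\pi]$. Geometrically the level sets $|w - a| + |w - \bar a| = r$ form the confocal family of ellipses with foci $a$ and $\bar a$, so I am looking for the smallest such ellipse meeting the unit circle; by reflection symmetry across the real axis the tangent point is either $z = \pm 1$ or a conjugate pair $\{z, \bar z\}$. To work algebraically I would set $A = 1 + |a|^2 - 2\alpha\cos\theta$ and $B = 2\beta\sin\theta$, so that $|e^{i\theta} - a|^2 = A - B$, $|e^{i\theta} - \bar a|^2 = A + B$, and $f(\theta)^2 = 2A + 2\sqrt{A^2 - B^2}$.

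A direct computation gives
\[
(f^2)'(\theta) = 4\sin\theta\Bigl(\alpha + \frac{\alpha A - 2\beta^2\cos\theta}{\sqrt{A^2 - B^2}}\Bigr),
\]
so $\theta = 0, \pi$ are automatic critical points. Setting the bracket to zero, isolating the radical, and squaring leads to the quadratic
\[
|a|^2 \cos^2\theta - \alpha(1 + |a|^2)\cos\theta + \alpha^2 = 0,
\]
which factors as $(|a|^2\cos\theta - \alpha)(\cos\theta - \alpha) = 0$. The root $\cos\theta = \alpha$ fails the sign constraint imposed by squaring (it would force $|a| \ge 1$) and is spurious; the remaining off-axis critical point $\theta_0$ satisfies $\cos\theta_0 = \alpha/|a|^2$, which lies in $(0,1]$ exactly when $\alpha \le |a|^2$, equivalently $|a - 1/2| \ge 1/2$.

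I expect substituting $\cos\theta_0 = \alpha/|a|^2$ into $A^2 - B^2$ to collapse the numerator to $|a|^4(1 - |a|^2)^2$, giving $\sqrt{A^2 - B^2} = 1 - |a|^2$ and hence $f(\theta_0)^2 = 4\beta^2/|a|^2$, so that $2\beta/f(\theta_0) = |a|$. The axial values are $f(0)^2 = 4((1-\alpha)^2 + \beta^2)$ and $f(\pi)^2 = 4((1+\alpha)^2 + \beta^2)$, the latter strictly larger since $\alpha > 0$. The algebraic identity
\[
|a|^2(1-\alpha)^2 - \beta^2(1 - |a|^2) = (|a|^2 - \alpha)^2
\]
then shows $f(\theta_0) \le f(0)$ whenever $\theta_0$ exists, with strict inequality precisely when $|a|^2 > \alpha$. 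Combining the two regimes yields the two branches of the stated formula, and this same identity delivers the bound $\beta/\sqrt{(1-\alpha)^2 + \beta^2} \le |a|$ in the second branch.

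The main obstacle I anticipate is the algebraic collapse of the numerator of $A^2 - B^2$ at $\cos\theta_0$ to a perfect square: this is the step that actually identifies $s_{\B}(a, \bar a)$ with $|a|$, and it is easy to slip. The sign check discarding the spurious root $\cos\theta = \alpha$ is the other point where careful bookkeeping is needed; everything else amounts to standard calculus on a compact circle.
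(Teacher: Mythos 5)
Your argument is correct: I checked the derivative formula $(f^2)'(\theta) = 4\sin\theta\bigl(\alpha + (\alpha A - 2\beta^2\cos\theta)/\sqrt{A^2-B^2}\bigr)$, the factorization $(|a|^2\cos\theta-\alpha)(\cos\theta-\alpha)=0$, the sign test that kills the root $\cos\theta=\alpha$ (there $2\beta^2\cos\theta-\alpha A=\alpha(|a|^2-1)<0$) and certifies $\cos\theta_0=\alpha/|a|^2$ (there it equals $\alpha(1-|a|^2)>0$, which moreover gives $\sqrt{A^2-B^2}=1-|a|^2$ for free, so the "collapse" you anticipated is automatic), the value $f(\theta_0)^2=4\beta^2/|a|^2$, and the identity $|a|^2(1-\alpha)^2-\beta^2(1-|a|^2)=(|a|^2-\alpha)^2$ that settles both $f(\theta_0)\le f(0)$ and the final inequality $\beta/\sqrt{(1-\alpha)^2+\beta^2}\le|a|$. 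The route is genuinely different from the paper's. The paper works with the confocal family of ellipses $E(c)=\{w: |a-w|+|\bar a-w|=c\}$, intersects $E(c)$ with the unit circle, and imposes tangency by setting the discriminant $D=64(c^2-4\beta^2)(\alpha^2c^2+\beta^2(c^2-4))$ of the resulting quadratic equal to zero; this yields $c=2\beta/|a|$ and the contact abscissa $x=\alpha/|a|^2$, with the case split arising from whether this abscissa exceeds $1$ (forcing the contact point to $(1,0)$). Your version instead parametrizes $\partial\mathbb{B}^2$ and runs a straight critical-point analysis of the focal sum. What your approach buys is an explicit, checkable comparison between the off-axis critical value and the endpoint value $f(0)$ — a step the paper handles by geometric appeal to where the tangent point lands — plus a clean disposal of the spurious root introduced by squaring; what the paper's approach buys is brevity and a geometric identification of the contact point that it reuses later (Theorem \ref{sthm} and the remark after Theorem \ref{zforsinBn}). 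Both correctly reduce the supremum in \eqref{sm} to minimizing $|a-z|+|\bar a-z|$ over the unit circle and arrive at the same two-branch formula with the same threshold $|a-1/2|=1/2$.
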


\begin{proof}
From the definition of the triangular ratio metric it follows that
\[
s_{\B}(a, \bar{a})=\frac{|a-\bar{a}|}{|a-z|+|\bar{a}-z|}=\frac{2\Im (a)}{|a-z|+|\bar{a}-z|}
\]
for some point $z=u+i v$. In order to find $z$ we consider the ellipse
\[
E(c)=\left\{w:|a-w|+|\bar{a}-w|=c \right\}
\]
and require that (1) $E(c)\subset{\overline{\mathbb B}}^2$, (2) $E(c)\cap \partial{\B}\neq \emptyset$ and the $x$- coordinate of the point of contact of $E(c)$
and the unit circle is unique. Both requirements (1) and (2) can be met for a suitable choice of $c$. The major and minor semiaxes of the ellipse
are $c/2$ and $\sqrt{(c/2)^2-\beta^2}$, respectively. The point of contact can be obtained by solving the system
\[
 \left\{ \begin{array}{ll} x^2+y^2=1 \\
  \frac{(x-\alpha)^2}{(c/2)^2-\beta^2}+\frac{y^2}{(c/2)^2}=1.
   \end{array}\right.
\]
Solving this system yields a quadratic equation for $x$ with the discriminant
\[
D=64(c^2- 4 \beta^2)(\alpha^2 c^2+\beta^2 (c^2-4)).
\]
If the discriminant is positive, there are at least two points of intersection of the
unit circle and the ellipse. Because we are interested only in the case when there are
at most two points of tangency, we must require that $D=0\,.$ Because the length of the smaller semiaxis
$\sqrt{(c/2)^2-\beta^2}>0\,,$ we see that $D=0$ only if
\[
c=\frac{2\beta}{\sqrt{\alpha^2+\beta^2}}.
\]
In this case
\[
x=\frac{1}{32 \beta^2} 8 \alpha c^2=\frac{\alpha}{\alpha^2+ \beta^2}.
\]
The points $\{  w= x+ iy :  x= x^2 + y^2\}$ define the circle  $\left|w-1/2 \right|=1/2$
 and we have $\frac{\alpha}{\alpha^2+ \beta^2}>1$
if and only if $\left|a-1/2 \right|<1/2, a=\alpha+ i \beta \,. $
In the case $\frac{\alpha}{\alpha^2+ \beta^2}>1$ the contact point is $z=(1,0)$,
whereas in the case $\frac{\alpha}{\alpha^2+ \beta^2}<1$ the point is
\[
z=(x,\sqrt{1-x^2})=\left(\frac{\alpha}{\alpha^2+ \beta^2},\frac{\sqrt{(\alpha^2+ \beta^2)^2-\alpha^2}}{\alpha^2+ \beta^2}\right).
\]
We now compute the focal sum $c$ in both cases

\[
 \left\{ \begin{array}{ll} c=\frac{2 \beta}{\sqrt{\alpha^2+ \beta^2}}=\frac{2 \Im{a}}{|a|},\quad \text{if}\quad \left|a-1/2\right|\geq 1/2, \\
 \\
  c=2|a-(1,0)|=2\sqrt{\beta^2+(1-\alpha)^2},\quad \text{if}\quad \left|a-1/2\right|\leq 1/2.
   \end{array}\right.
\]
Finally we see that
\[
s_{\B}(a,\bar{a})=\frac{|a-\bar{a}|}{c}=|a|,\quad \text{if}\, \left|a-1/2\right|\geq 1/2,
\]
otherwise
\[
s_{\B}(a,\bar{a})=\frac{|a-\bar{a}|}{c}=\frac{\beta}{\sqrt{\beta^2+(1-\alpha)^2}}=\frac{\Im{a}}{\sqrt{(1-\Re{a})^2+(\Im{a})^2}}.\qedhere
\]
\end{proof}

\begin{thm}\label{sthm}
Let $x, y\in\B$ with $|x|=|y|$ and $z\in\partial\B$ such that $|y-z|<|x-z|$ and
\[
\measuredangle (y,z,0)=\measuredangle (0,z,x)=\gamma \,.
\]
Then $\cos{\gamma}=(|x-z|+|y-z|)/2$ and hence $|y-z|<1$. Moreover, $0, x, y, z$ are concyclic.
\end{thm}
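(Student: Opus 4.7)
My plan is to read off both assertions from the law of cosines applied at the common vertex $z$. Set $r = |x| = |y|$, $s = |y-z|$, and $t = |x-z|$, so $0 < s < t$ by hypothesis, and note $|0z| = 1$. In each of the triangles with vertex set $\{0,z,y\}$ and $\{0,z,x\}$ the included angle at $z$ is $\gamma$, so the law of cosines gives
\[
r^{2} \;=\; 1 + s^{2} - 2s\cos\gamma \;=\; 1 + t^{2} - 2t\cos\gamma.
\]
Hence $s$ and $t$ are the two distinct roots of the quadratic $u^{2} - 2u\cos\gamma + (1-r^{2}) = 0$, and Vieta's formulas yield
\[
s + t \;=\; 2\cos\gamma, \qquad st \;=\; 1 - r^{2}.
\]
The first identity is precisely $\cos\gamma = (|x-z|+|y-z|)/2$, and since $s < (s+t)/2 = \cos\gamma \le 1$, the bound $|y-z| < 1$ follows.

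For the concyclicity claim I would identify $0$ as the intersection of two canonical lines associated with the triangle $xyz$: the assumption $|0x| = |0y|$ places $0$ on the perpendicular bisector $\ell$ of the segment $[x,y]$, while the angle-equality hypothesis places $0$ on the internal angle bisector $b$ of $\angle xzy$ at the vertex $z$. I would then invoke the classical fact that, in any non-degenerate triangle, the interior angle bisector from a vertex and the perpendicular bisector of the opposite side meet precisely at the midpoint of the arc of the circumscribed circle opposite to that vertex (an immediate consequence of the inscribed angle theorem). Two distinct lines meet in at most one point, so this arc-midpoint must coincide with $0$, and in particular $0$ lies on the circumcircle of $\{x,y,z\}$.

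The computations above are routine, so the only genuine point to watch is the exclusion of degenerate configurations. If $x,y,z$ were collinear with $z$ between $x$ and $y$, the angle condition would force $|0z|=1$ to equal the perpendicular distance from $0$ to the line through $x,y$, giving $r^{2} \ge 1$ and contradicting $x \in \B$; the remaining collinear possibility forces $\gamma = 0$ and places $0$ on the common line, in which case the conclusion persists in the generalized-circle sense. Modulo this bookkeeping, the proof is a clean two-ingredient argument: the law of cosines together with the classical arc-midpoint lemma.
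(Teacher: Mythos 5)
Your proof is correct, and for the concyclicity claim it takes a genuinely different route from the paper. The first half coincides with the paper's argument: both apply the Law of Cosines in the triangles $0zx$ and $0zy$ and subtract. Your extra step via Vieta, which also yields $|x-z|\,|y-z|=1-|x|^2$, is a pleasant bonus that the paper does not record. For the concyclicity, the paper argues through Ptolemy's theorem -- $0,x,z,y$ are concyclic iff $|y-z||x|+|y||x-z|=|x-y|$, i.e.\ iff $|x-z|+|y-z|=|x-y|/|x|$ -- and then extracts this identity from the explicit formula $s_{\B}(x,y)=|x|$ of Theorem \ref{sb02}; thus the paper deduces concyclicity from the triangular-ratio-metric computation that is the theme of the section. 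You instead use the classical arc-midpoint (``incenter--excenter'') lemma: $0$ lies on the perpendicular bisector of $[x,y]$ and on the internal-bisector line of $\measuredangle xzy$, these two lines are distinct because $|x-z|\neq|y-z|$, and their unique intersection point is the midpoint of the arc $xy$ of the circumcircle of $x,y,z$; hence $0$ lies on that circumcircle. This is more elementary and self-contained, since it does not lean on Theorem \ref{sb02}, whose proof (via ellipse tangency) is itself nontrivial; what it gives up is the explicit link to $s_{\B}$ that the paper is cultivating.

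One small caveat concerns your degeneracy discussion. You correctly dispose of the case where $z$ lies between $x$ and $y$, but the remaining collinear case does not force $\gamma=0$ nor place $0$ on the line: since $[x,y]\subset\overline{B}(0,|x|)$ cannot contain the boundary point $z$, collinearity means $x$ and $y$ lie on a common ray from $z$, the two angle conditions then hold vacuously for an arbitrary $\gamma$, and the four points are genuinely not concyclic. So the statement tacitly assumes the nondegenerate configuration in which the ray from $z$ through $0$ separates $x$ from $y$ (the ellipse-tangency picture of Figure 1) -- an assumption the paper's own proof also makes silently. Your main argument is unaffected by this.
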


\begin{figure}[h]
\begin{center}
     \includegraphics[width=7cm]{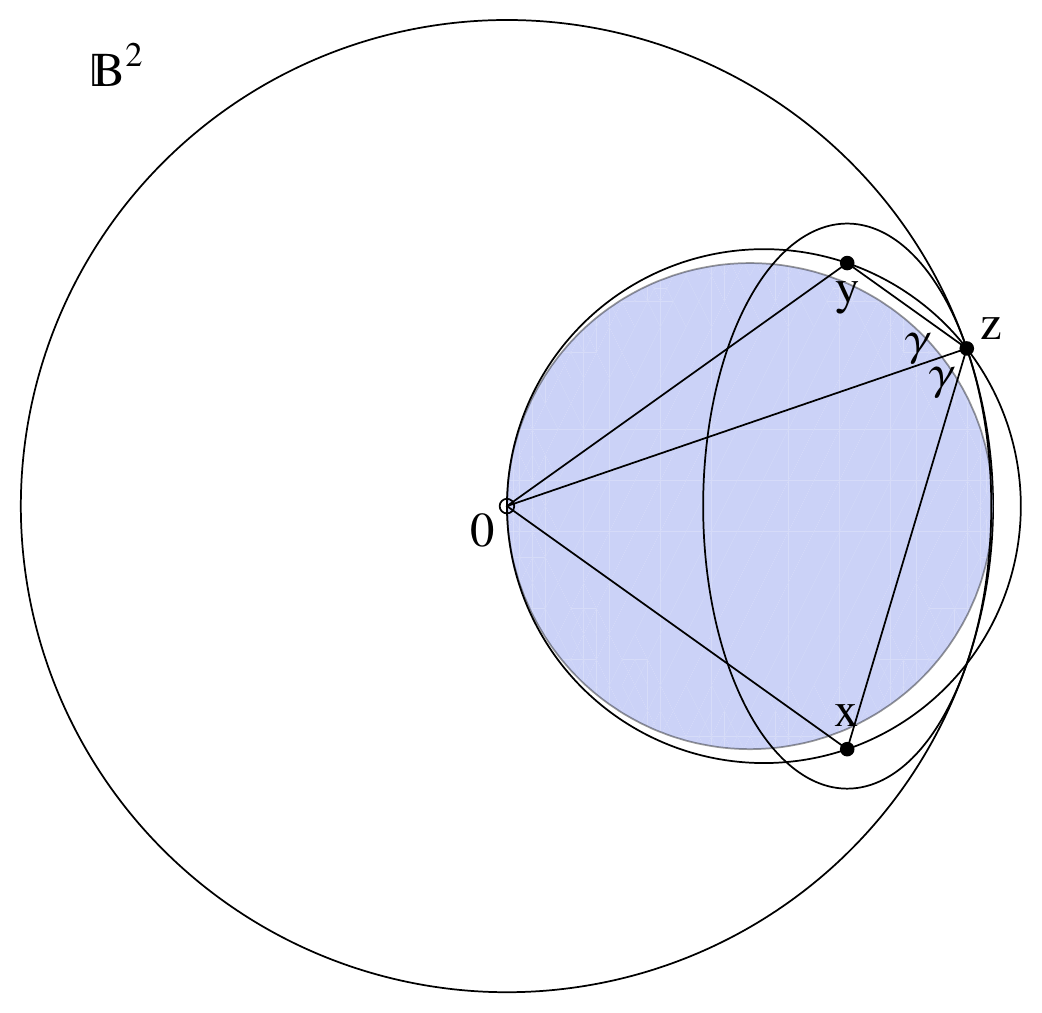}
\caption{ The ellipse with foci $x$ and $y$ internally tangent to the unit circle at $z, |x|=|y|\,.$ The points $0, x, z, y$ are concyclic by Theorem \ref{sthm}. }
\end{center}
    \end{figure}

\begin{proof}
By the Law of Cosines
\[
|x|^2=|x-z|^2+1-2|x-z|\cos{\gamma}
\]
and
\[
|y|^2=|y-z|^2+1-2|y-z|\cos{\gamma}.
\]
Because $|x|=|y|$ these equalities yield
\beq\label{cos1}
\cos{\gamma}=\frac{|x-z|+|y-z|}{2}.
\eeq
By Ptolemy's Theorem $0, x, z, y$ are concyclic if and only if
\[
|y-z||x|+|y||x-z|=1\cdot |x-y|,
\]
which is equivalent to
\beq\label{ptol}
|y-z|+|x-z|=\frac{|x-y|}{|x|}.
\eeq
By Theorem \ref{sb02}, we see that
\[
s_{\B}(x,y)=\frac{|x-y|}{|x-z|+|z-y|}=|x|,
\]
which proves \eqref{ptol}.
\end{proof}
%

\begin{cor}\label{sym}
Let $D\subset\B$ be a domain and let $x, -x\in D$. Then
\[
s_D(x,-x)\geq |x|.
\]
\end{cor}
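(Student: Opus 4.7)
The plan is to exploit the symmetry of the two antipodal points $x$ and $-x$ together with the constraint $D \subset \mathbb{B}^2$ to locate an explicit boundary point of $D$ that realizes the bound. The key observation is that for the triangular ratio metric we only need one admissible $z \in \partial D$ for which the ratio in the supremum exceeds $|x|$.

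First I would fix the geometric setup: set $u = x/|x|$ and look at the ray $\gamma(r) = x + ru$, $r \geq 0$, which is the straight line through $-x$ and $x$ continued past $x$. Since $\gamma(0) = x \in D$ and $\gamma(1-|x|) = u \in \partial \mathbb{B}^2$ lies outside $D$ (because $D \subset \mathbb{B}^2$), the number
\[
r^\ast = \inf\{\, r > 0 : \gamma(r) \notin D \,\}
\]
satisfies $0 < r^\ast \leq 1 - |x|$, and standard point-set topology shows that $z := \gamma(r^\ast)$ belongs to $\partial D$. Thus $z$ lies on the line through $-x$, $x$, on the far side of $x$ from $-x$, and $|z| = |x| + r^\ast \leq 1$.

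Next I would compute the two relevant distances. Since $-x$, $x$, $z$ are collinear with $x$ between $-x$ and $z$, one has $|x - z| = r^\ast$ and $|z-(-x)| = |x| + |z| = 2|x| + r^\ast$, hence
\[
|x-z| + |z-(-x)| = 2(|x| + r^\ast) = 2|z|.
\]
Therefore
\[
s_D(x,-x) \;\geq\; \frac{|x-(-x)|}{|x-z| + |z-(-x)|} \;=\; \frac{2|x|}{2|z|} \;=\; \frac{|x|}{|z|} \;\geq\; |x|,
\]
where the last inequality uses $|z| \leq 1$. This yields the claim.

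The argument is essentially a one-line geometric observation once the right $z$ is picked; the only point requiring care is the topological claim that the ray $\gamma$ must cross $\partial D$ by the time it reaches $\partial \mathbb{B}^2$, which is immediate from the connectedness of $\gamma([0,1-|x|])$ and the fact that this set meets both $D$ and its complement.
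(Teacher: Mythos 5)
Your proof is correct, but it takes a genuinely different route from the paper. The paper deduces the corollary in one line from two ingredients: the domain monotonicity of the triangular ratio metric (since $D\subset\mathbb{B}^2$, one has $s_D(x,-x)\geq s_{\mathbb{B}^2}(x,-x)$) together with the explicit formula of Theorem \ref{sb02}, which gives $s_{\mathbb{B}^2}(x,-x)=|x|$ after rotating so that $-x=\bar{x}$. You instead bypass both ingredients and produce an explicit admissible boundary point: the first exit point $z$ of the ray from $-x$ through $x$, for which collinearity gives $|x-z|+|z-(-x)|=2|z|\leq 2$, whence $s_D(x,-x)\geq |x|/|z|\geq |x|$. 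Your computation and the topological claim that $z\in\partial D$ are both sound (the only cosmetic gap is that $u=x/|x|$ requires $x\neq 0$, but the case $x=0$ is trivial). What each approach buys: the paper's argument is essentially free given that Theorem \ref{sb02} has already been established, and it records the sharper fact that equality holds for $D=\mathbb{B}^2$; your argument is self-contained, works verbatim in $\mathbb{B}^n$ for all $n$, and yields the slightly stronger quantitative bound $s_D(x,-x)\geq |x|/|z|$ in terms of where the ray first leaves $D$.
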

\begin{proof}
It follows from Theorem \ref{sb02} that
\[
s_D(x,-x)\geq s_{\B}(x,-x)=|x|.\qedhere
\]
\end{proof}

\bigskip

%

\begin{thm}\label{zforsinBn}
  Let $x \in (0,1)$, $y \in \mathbb{B}^2 \setminus \{ 0 \}$, $\textnormal{Im} \, y \geq 0$, with $|y|=|x|$ and denote $\omega = \measuredangle (x,0,y)$. Then the supremum in \eqref{sm} is attained at $z = e^{i \theta}$ for
  \[
    \theta = \left\{ \begin{array}{ll} \frac{\omega}{2}, & \textrm{if } \sin \frac{\pi-\omega}{2} \geq |x|,\\ \frac{\omega-\pi}{2}+\arcsin \frac{\sin \frac{\pi-\omega}{2}}{|x|}, & \textrm{if } \sin \frac{\pi-\omega}{2} < |x|. \end{array} \right.
  \]
\end{thm}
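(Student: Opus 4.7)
The key observation is that $|x-y|$ is a fixed quantity, so the supremum in \eqref{sm} over $z\in\partial\mathbb{B}^2$ reduces to the minimisation of the focal sum $f(z)=|x-z|+|y-z|$ on the unit circle. Since the level sets of $f$ are confocal ellipses with foci $x,y$, the minimiser is the tangency point of the smallest such ellipse contained in and internally tangent to $\overline{\mathbb{B}^2}$.

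The plan is to reduce to Theorem~\ref{sb02} by a rotation. Setting $w\mapsto e^{-i\omega/2}w$ sends $x$ to $x'=|x|e^{-i\omega/2}$ and $y$ to $y'=|x|e^{i\omega/2}$, a complex conjugate pair, while fixing $\partial\mathbb{B}^2$. Writing $y'=\alpha'+i\beta'$ with $\alpha'=|x|\cos(\omega/2)$ and $\beta'=|x|\sin(\omega/2)\geq 0$, the proof of Theorem~\ref{sb02} locates the tangency point of the extremal ellipse on $\partial\mathbb{B}^2$: it is $z'=1$ when $|y'-\tfrac12|\leq\tfrac12$, and $z'=\alpha'/|y'|^2+i\sqrt{1-(\alpha'/|y'|^2)^2}$ (up to complex conjugation) when $|y'-\tfrac12|\geq\tfrac12$. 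I would then check the equivalences
\[
|y'-\tfrac12|\leq\tfrac12\iff\alpha'\geq|y'|^2\iff\cos(\omega/2)\geq|x|\iff\sin\tfrac{\pi-\omega}{2}\geq|x|,
\]
matching the case split of the statement; at the common boundary $\cos(\omega/2)=|x|$ both branches of the conclusion collapse to $\theta=\omega/2$, confirming consistency.

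In the first case the extremal $z=e^{i\omega/2}\cdot 1=e^{i\omega/2}$, so $\theta=\omega/2$. In the second case $z'=e^{i\theta'}$ with $\cos\theta'=\cos(\omega/2)/|x|$, and the reflection symmetry of the configuration across the line through the origin at angle $\omega/2$ produces two symmetric extremal points on $\partial\mathbb{B}^2$; picking the branch $\theta'=-\arccos(\cos(\omega/2)/|x|)$ and using $\arccos u=\pi/2-\arcsin u$ gives
\[
\theta=\tfrac{\omega}{2}+\theta'=\tfrac{\omega-\pi}{2}+\arcsin\frac{\sin((\pi-\omega)/2)}{|x|},
\]
as asserted.

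The main obstacle will be justifying carefully that the rotation transports the full extremal analysis of Theorem~\ref{sb02}, and in particular that when the ``interior'' candidate $\alpha'/|y'|^2+i\sqrt{1-(\alpha'/|y'|^2)^2}$ has its real part exceeding $1$ (and hence is geometrically infeasible), the contact at $(1,0)$ is genuinely the global minimum on the unit circle rather than merely a transversal intersection of the corresponding ellipse. A more computational alternative, which bypasses this subtlety, is to differentiate $f(\theta)=\sqrt{1+|x|^2-2|x|\cos\theta}+\sqrt{1+|x|^2-2|x|\cos(\theta-\omega)}$ directly; the substitution $\theta=\omega/2+\phi$ exploits the parity, and $f'(\phi)=0$ reduces, after squaring, to the quadratic
\[
|x|\cos^2\phi-(1+|x|^2)\cos(\omega/2)\cos\phi+|x|\cos^2(\omega/2)=0
\]
with roots $\cos\phi=\cos(\omega/2)/|x|$ and $\cos\phi=|x|\cos(\omega/2)$; the second is extraneous (it fails the unsquared equation), and the first is admissible precisely when $\cos(\omega/2)\leq|x|$. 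Comparing the values of $f$ at $\phi=0$ and at the valid off-axis critical point then yields the same case distinction.
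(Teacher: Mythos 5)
Your proposal is correct, but it takes a genuinely different route from the paper. The paper works entirely in the original configuration: it imposes the reflection condition $\measuredangle(x,z,0)=\measuredangle(y,z,0)=\gamma$ at the extremal point, applies the Law of Sines in the triangles $0xz$ and $0yz$ to obtain $\sin(\pi-\theta-\gamma)=\sin(\pi-\omega+\theta-\gamma)$, reads off the two candidate angles from the two branches of this equation, and then decides between them by a further Law of Sines computation that reduces the comparison of the two focal sums to the inequality $1<1/\sin\omega_0$. You instead rotate by $e^{-i\omega/2}$ to land in the conjugate-pair configuration of Theorem \ref{sb02} and read the tangency point off the ellipse--circle discriminant analysis in that proof; this makes the case split transparent (it is literally the $|a-\tfrac12|\lessgtr\tfrac12$ dichotomy of Theorem \ref{sb02}, via $|y'-\tfrac12|\le\tfrac12\iff\cos(\omega/2)\ge|x|$), and your identity $\theta=\tfrac{\omega}{2}-\arccos\bigl(\cos(\omega/2)/|x|\bigr)=\tfrac{\omega-\pi}{2}+\arcsin\bigl(\sin\tfrac{\pi-\omega}{2}/|x|\bigr)$ correctly reconciles the formulas (the paper's \eqref{solution2} is the mirror image $\omega-\theta$ of the stated branch, consistent by the reflection symmetry of the configuration). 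Your calculus alternative also checks out: the quadratic $|x|\cos^2\phi-(1+|x|^2)\cos(\omega/2)\cos\phi+|x|\cos^2(\omega/2)=0$ has discriminant $(1-|x|^2)^2\cos^2(\omega/2)$ and exactly the roots you state, and the root $\cos\phi=|x|\cos(\omega/2)$ is indeed rejected by the sign condition in the unsquared equation. What each approach buys: yours unifies this theorem with Theorem \ref{sb02} and, via the derivative, accounts for \emph{all} critical points (which also disposes of the feasibility worry you raise, since a smooth periodic function attains its minimum at a critical point and only $\phi=0,\pi$ survive when $\cos(\omega/2)\ge|x|$); the paper's argument is self-contained and produces the angle $\gamma$ reused in Theorem \ref{sthm}. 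The one step you assert without computing is the final value comparison between $\phi=0$ and the off-axis critical point when $\cos(\omega/2)<|x|$ --- that is exactly the computation occupying the last paragraph of the paper's proof, and it does need to be carried out to finish.
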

\begin{proof}
  By \eqref{sm} and geometry it is clear that the supremum is attained at a point $z = e^{i \theta}$ with $\measuredangle (x,z,0) = \measuredangle (y,z,0)$. We denote this angle by $\gamma$. Since $\gamma = \measuredangle (x,z,0) = \measuredangle (y,z,0)$ and $|x|=|y|$ we obtain by the Law of Sines
  \beq\label{sines}
    \frac{1}{\sin(\pi-\theta-\gamma)} = \frac{|x|}{\sin \gamma} = \frac{1}{\sin(\pi-\omega+\theta-\gamma)},
  \eeq
  which is equivalent to
  \[
    \sin(\pi-\theta-\gamma) = \sin(\pi-\omega+\theta-\gamma).
  \]
  This has two solutions: $a=b$ or $a+b=\pi$, where $a=\pi-\theta-\gamma$ and $b=\pi-\omega+\theta-\gamma$. The solution $a=b$ gives
  \begin{equation}\label{solution1}
    \theta = \frac{\omega}{2}.
  \end{equation}
  The solution $a+b=\pi$ gives $\omega = \pi-2\gamma$. In this case by \eqref{sines} we obtain
 \begin{equation}\label{sine4}
    \frac{1}{\sin \left( \frac{\pi+\omega}{2}-\theta \right)} = \frac{|x|}{\sin \left( \frac{\pi-\omega}{2} \right)},
 \end{equation}
  which gives
  \begin{equation}\label{solution2}
    \theta = \frac{\pi+\omega}{2}-\arcsin \frac{\sin \frac{\pi-\omega}{2}}{|x|}.
  \end{equation}

  We have two solutions \eqref{solution1} and \eqref{solution2}. Next we find out which solution gives the supremum in \eqref{sm}. First we note that \eqref{solution2} is valid only for $\sin \frac{\pi-\omega}{2} \leq |x|$. Thus for $\sin \frac{\pi-\omega}{2} > |x|$ we choose \eqref{solution1}. In the case $\sin \frac{\pi-\omega}{2} = |x|$ both solutions give $\theta = \frac{\omega}{2}$. Thus, in the case $\sin \frac{\pi-\omega}{2} \geq |x|$, the supremum in \eqref{sm} is attained at $z=e^{i\omega/2}$.

  Finally, we consider the case $\sin \frac{\pi-\omega}{2} < |x|$. Let us denote $\theta_1=\frac{\omega}{2}$, $z_1=e^{i \theta_1}$, $ \theta_2 = \frac{\pi+\omega}{2}-\arcsin \frac{\sin \frac{\pi-\omega}{2}}{|x|}$ and $z_2=e^{i \theta_2}$. Moreover let $\omega_0=\measuredangle (0,x,z_1)$, $\omega_1=\measuredangle (0,x,z_2)$, $\omega_2=\measuredangle (0,y,z_2)$. Again by the Law of Sines, we obtain
  \beq\label{sin2}
  \frac{|x-z_2|}{\sin{\theta_2}}=\frac{1}{\sin{\omega_1}}=\frac{1}{\sin{\omega_2}}=\frac{|z_2-y|}{\sin(\omega-\theta_2)}:=k_1,
  \eeq
  and
  \beq\label{sin3}
  \frac{|x-z_1|}{\sin{\frac{\omega}{2}}}=\frac{1}{\sin{\omega_0}}:=k_2.
  \eeq
By \eqref{sine4}, we see that
$$
k_1=\frac{|x|}{\sin \left( \frac{\pi-\omega}{2} \right)}.
$$
By \eqref{sin2} and \eqref{sin3}, the inequality $|x-z_2|+|z_2-y| < |x-z_1|+|z_1-y|$ is equivalent to
  \[
    k_1(\sin \theta_2 + \sin (\omega-\theta_2)) < 2 k_2 \sin{\frac{\omega}{2}} .
  \]
  By substituting $k_1$ and $k_2$, it is enough to show that
  \[
  \frac{|x|}{\sin \left( \frac{\pi-\omega}{2} \right)} \cos\left(\theta_2-\frac{\omega}{2}\right) < \frac{1}{\sin{\omega_0}},
  \]
  which is, by substitution of $\theta_2$, equivalent to the inequality
  \[
  1 < \frac{1}{\sin{\omega_0}}.
  \]

Thus, in the case $\sin \frac{\pi-\omega}{2} < |x|$, the supremum in \eqref{sm} is attained at $z_2$.
\end{proof}

\begin{rem}
By the assumptions of Lemma \ref{zforsinBn}, if $\sin \frac{\pi-\omega}{2} \geq |x|$  we attain

\[
s_{\B}(x,y)=\frac{|x-y|}{|x-e^{i\omega/2}|+|y-e^{i\omega/2}|}=\frac{|x|\sin{\frac{\omega}{2}}}{\sqrt{1+|x|^2-2|x|\cos{\omega/2}}}.
\]
This formula is equivalent to \eqref{sbeta}, if $y=\bar{x}$, and thus by Theorem \ref{sb02} we collect
\[
  s_{\B}(x,y) = \left\{ \begin{array}{ll} |x|, & \cos(\omega/2) < |x|,\\ \displaystyle \frac{|x|\sin(\omega/2)}{\sqrt{1+|x|^2-2|x|\cos(\omega/2)}}, & \cos(\omega/2) \ge |x|, \end{array} \right.
\]
where $x,y \in \B$, $|y|=|x|$ and $\omega = \measuredangle(x,0,y)$.

Note that the following inequalities are equivalent:

$|a-\frac{1}{2}|\leq \frac{1}{2}$ where $a$ is as in Theorem \ref{sb02}, $\cos{\frac{\omega}{2}}\geq |x|$, $\frac{|x-y|}{2}\leq |x|\sqrt{1-|x|^2}$.
\end{rem}



\section{The proof of the main result}\label{section4}


\noindent{\bf Proof of Theorem \ref{2sc}.}
By a simple geometric observation we see that
\begin{equation}\label{cassinprod}
\inf_{w\in\partial \Bn}|x-w||w-y|\leq 1.
\end{equation}
In fact, for given $x,y\in\Bn$, let $x', y'\in\Bn$ be the points such that $y'-x'=y-x$ and $y'=-x'$. Then the size of the maximal Cassinian oval $C(x,y)$ with foci $x, y$ which is contained in the closed unit ball is not greater than that of the maximal Cassinian oval $C(x',y')$ with foci $x', y'$, see the Figure 2. 
\begin{figure}[h]\label{fig-cassini}
\begin{center}
     \includegraphics[width=7cm]{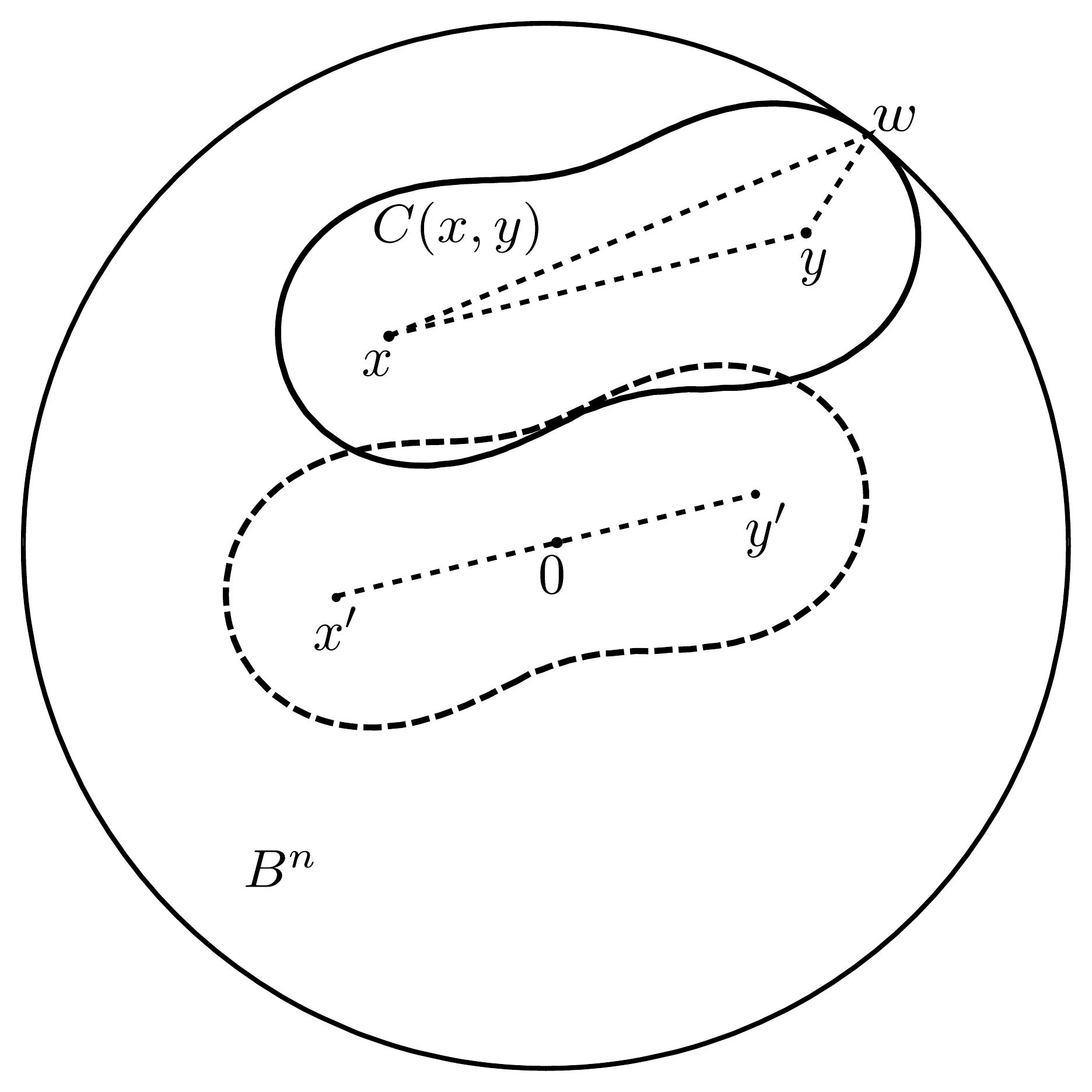}
\caption{The maximal Cassinian oval $C(x,y)$ is not larger than the maximal Cassinian oval $C(x',y')$.}
\end{center}
\end{figure}
This implies that
\allowdisplaybreaks\begin{align*}
\inf_{w\in\partial \Bn}|x-w||w-y| & \leq \inf_{w\in\partial \Bn}|x'-w||w-y'|\\
                                 & = 1-\left(\frac{|x-y|}{2}\right)^2 \leq 1.
\end{align*}

Therefore, for $x,y\in D\subset \Bn$, we have that
\begin{equation}\label{cassinprod4D}
\inf_{w\in\partial D}|x-w||w-y|\leq\inf_{w\in\partial \Bn}|x-w||w-y|\leq 1.
\end{equation}
For $x=y\in D$, the desired inequality is trivial.  For $x,y\in D$ with $x\neq y$,
it follows from the inequality of arithmetic and geometric means and the inequality \eqref{cassinprod4D} that
\allowdisplaybreaks\begin{align*}
\frac{c_{D}(x,y)}{2s_{D}(x,y)} &= \frac{\inf\limits_{w\in\partial D}(|x-w|+|w-y|)}{2\inf\limits_{w\in\partial D}(|x-w||w-y|)}\\
                                   &\geq \frac{\inf\limits_{w\in\partial D}\sqrt{|x-w||w-y|}}{\inf\limits_{w\in\partial D}(|x-w||w-y|)}\\
                                   &= \frac{\sqrt{\inf\limits_{w\in\partial D}(|x-w||w-y|)}}{\inf\limits_{w\in\partial D}(|x-w||w-y|)}\\
                                   &\geq 1.
\end{align*}
For the sharpness of the constant in the case of the unit ball, let $y=-x\to 0$. It is easy to see that both the inequality of arithmetic and geometric means and the inequality \eqref{cassinprod} will asymptotically become equalities. This completes the proof.  \hfill $\square$

\begin{cor}
Let $D\subset\Rn$ be a bounded domain. Then, for $x,y\in D$,
$$
c_D(x,y)\geq \frac{2}{\sqrt{n/(2n+2)} \operatorname{diam}(D)} s_D(x,y).
$$
\end{cor}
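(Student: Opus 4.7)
The plan is to apply Theorem \ref{2sc} to a suitably rescaled copy of $D$ and to control the scaling factor via Jung's theorem, which states that every bounded set in $\Rn$ of diameter $d$ is contained in a closed ball of radius $R=d\sqrt{n/(2n+2)}$.

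Let $d=\operatorname{diam}(D)$ and $R=d\sqrt{n/(2n+2)}$. Jung's theorem produces a point $c\in\Rn$ with $\overline{D}\subset\overline{B}(c,R)$. Fix $\varepsilon>0$, set $\lambda=1/(R+\varepsilon)$, and let $D':=\lambda(D-c)$. Since $\overline{D}\subset\overline{B}(c,R)\subset B(c,R+\varepsilon)$, the translated and rescaled domain $D'$ is a subdomain of $\Bn$. Writing $x':=\lambda(x-c)$ and $y':=\lambda(y-c)$, Theorem \ref{2sc} applied to $D'$ gives
\[
c_{D'}(x',y')\ge 2\,s_{D'}(x',y').
\]

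Both the Cassinian and triangular ratio metrics are translation invariant, and a direct computation from the definitions \eqref{cm} and \eqref{sm} shows that under scaling by $\lambda>0$ one has $c_{\lambda G}(\lambda x,\lambda y)=c_G(x,y)/\lambda$ while $s_{\lambda G}(\lambda x,\lambda y)=s_G(x,y)$. Substituting these into the inequality above yields
\[
(R+\varepsilon)\,c_D(x,y)\ge 2\,s_D(x,y),
\]
and letting $\varepsilon\downarrow 0$ gives $c_D(x,y)\ge (2/R)\,s_D(x,y)$, which is precisely the asserted inequality.

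The main obstacle is identifying the correct inscribing constant: once one recognises that Jung's constant $\sqrt{n/(2n+2)}$ is exactly what appears in the conclusion, the rest is a routine scaling reduction to Theorem \ref{2sc}. A minor technical point is that Jung's theorem only guarantees containment in the closed ball $\overline{B}(c,R)$, not in its interior; I handle this by working with radius $R+\varepsilon$ and passing to the limit $\varepsilon\downarrow 0$.
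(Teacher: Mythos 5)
Your proof is correct and follows essentially the same route as the paper: Jung's theorem to inscribe $D$ in a ball of radius $\sqrt{n/(2n+2)}\,\operatorname{diam}(D)$, the scaling behaviour $c_{\lambda G}(\lambda x,\lambda y)=c_G(x,y)/\lambda$ and $s_{\lambda G}(\lambda x,\lambda y)=s_G(x,y)$, and an application of Theorem \ref{2sc}. Your $\varepsilon$-dilation is a harmless extra precaution (since $D$ is open, $\overline{D}\subset\overline{B}(c,R)$ already forces $D\subset B(c,R)$), but otherwise the argument matches the paper's.
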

\begin{proof}
By the well-known Jung's theorem \cite[Theorem 11.5.8]{berger}, there exists a ball $B$ with radius $\sqrt{n/(2n+2)}\mbox{diam}(D)$ which contains the bounded domain $D$. Let $f$ be a similarity which maps the ball $B$ onto the unit ball $\Bn$. Then it is easy to see that for all $x,y\in B$,
$$
|f(x)-f(y)|=\frac{|x-y|}{\sqrt{n/(2n+2)}\mbox{diam}(D)}.
$$
By the definitions of the Cassinian metric and the triangle ratio metric, we have that for $x,y\in D$,
\begin{equation}\label{csdfd}
c_{fD}(f(x),f(y))=\sqrt{n/(2n+2)}\mbox{diam}(D)c_D(x,y)
\end{equation}
and
\begin{equation}\label{csdfd1}
 s_{fD}(f(x),f(y))=s_D(x,y).
\end{equation}
Since $fD\subset\Bn$, by Theorem \ref{2sc} we have
\begin{equation}\label{csfd}
c_{fD}(f(x),f(y))\geq 2 s_{fD}(f(x),f(y)).
\end{equation}
Combining \eqref{csdfd}, \eqref{csdfd1}, and \eqref{csfd}, we get the desired inequality.
\end{proof}

For some basic information about the Schwarz lemma the reader is referred to \cite{vu}.
In \cite{bv}, an explicit form of the Schwarz lemma for quasiregular mappings was given. In this Theorem we use the well-known distortion
function $ \varphi_K(r) $ of the Schwarz lemma, see \cite{vu,vw}. We also need the distortion function for
$K>1\,$ and  $0\leq t<\infty$
$$
\eta_K(t)=\frac{\varphi^2_K(\sqrt{t/(1+t)})}{1-\varphi^2_K(\sqrt{t/(1+t)})}\le e^{\pi(K-1/K)} \max \{t^{1/K}, t^K\}\,\,.
$$
See \cite[10.24, 10.35]{avv}.

\begin{thm}   \cite[Theorem 11.2,11.3]{vu}, \cite[Theorem 1.10]{bv}, \cite[Theorem 3.7]{vw}\label{bvtrans}
If $f:\B\rightarrow {\mathbb R}^2$ is a non-constant $K$-quasiregular map with $f\B\subset\B$ and $\rho$ is the hyperbolic metric of $\B$, then
$$\rho_{\B}(f(x),f(y))\le c(K) \max\{\rho_{\B}(x,y),\,\rho_{\B}(x,y)^{1/K}\}$$
for all $x,\,y\in\B$, where $c(K)=2\arth(\varphi_K({\rm th}\frac{1}{2}))\le 1.3507(K-1)+ K$ and, in particular, $c(1)=1$. Moreover,
if $f(0)=0\,,$ then for $x \in \B$
$$   |f(x)| \le \varphi_K(|x|)\le 4^{1-1/K} |x|^{1/K} \,.$$
\end{thm}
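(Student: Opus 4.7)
The plan is to reduce the general hyperbolic-distortion inequality to the origin-preserving case via the Möbius invariance of $\rho_\B$, and then invoke the classical quasiregular Schwarz lemma. First I would establish the second assertion, $|f(x)|\le\varphi_K(|x|)$ under the normalization $f(0)=0$. This is the standard Schwarz lemma for $K$-quasiregular self-maps of $\B$: the modulus (equivalently, capacity) of the Grötzsch ring $\B\setminus[0,x]$ cannot be shrunk by $f$ by a factor worse than $1/K$ (the KO-inequality for the modulus of a curve family), and the definition of $\varphi_K$ as the distortion function of the Grötzsch capacity translates this modulus bound directly into the pointwise estimate. The elementary majorant $\varphi_K(r)\le 4^{1-1/K}r^{1/K}$ is the classical Hersch--Pfluger bound and is a purely special-function statement.

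Next, for an arbitrary pair $x,y\in\B$, I would pre-compose $f$ with a Möbius automorphism $T_a$ of $\B$ sending $x\mapsto 0$, and post-compose with a Möbius automorphism $T_b$ of $\B$ sending $f(x)\mapsto 0$. The map $g:=T_b\circ f\circ T_a^{-1}$ is $K$-quasiregular, satisfies $g\B\subset\B$, and fixes $0$, so by the previous step
$$|g(T_a(y))|\le\varphi_K(|T_a(y)|).$$
Since $T_a$ and $T_b$ are hyperbolic isometries, $\rho_\B(f(x),f(y))=\rho_\B(0,g(T_a(y)))=2\arth|g(T_a(y))|$ and $|T_a(y)|=\th(\rho_\B(x,y)/2)$ by \eqref{tro}. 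Combining these identities yields
$$\rho_\B(f(x),f(y))\le 2\arth\varphi_K\!\bigl(\th\tfrac{\rho_\B(x,y)}{2}\bigr),$$
which is the desired inequality in unscaled form.

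The remaining task is the one-variable inequality
$$2\arth\varphi_K(\th(t/2))\le c(K)\max\{t,t^{1/K}\},\qquad c(K)=2\arth\varphi_K(\th(1/2)),$$
valid for $t>0$. The strategy is to analyse the monotonicity of the ratio $\psi_K(t)/\max\{t,t^{1/K}\}$ where $\psi_K(t):=2\arth\varphi_K(\th(t/2))$: a Hölder-type property of $\varphi_K$ (namely $\varphi_K(r^{1/K})/r$ is monotone in the appropriate direction) forces the extremal ratio to occur at $t=1$ in both the regime $t\le 1$ (compared with $t^{1/K}$) and $t\ge 1$ (compared with $t$), which pins down the constant. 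The explicit numerical estimate $c(K)\le 1.3507(K-1)+K$ then follows from sharp asymptotics of $\varphi_K(\th(1/2))$ as $K\to 1^+$, together with the normalization $\varphi_1(r)=r$ giving $c(1)=1$.

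The main obstacle is the origin-preserving quasiregular Schwarz lemma (Step 1), whose proof depends on the KO-inequality and modulus-of-curve-family machinery that lies well outside hyperbolic-geometry manipulations; once granted, the Möbius reduction and the special-function inequality for $\varphi_K$ and $\arth$ are mechanical.
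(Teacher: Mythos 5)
The paper offers no proof of Theorem \ref{bvtrans} at all: it is quoted as a known result with the citations \cite[Theorem 11.2, 11.3]{vu}, \cite[Theorem 1.10]{bv}, \cite[Theorem 3.7]{vw}, so there is no internal argument to compare against. Your outline is a faithful reconstruction of the standard proof found in those sources, and its structure is correct: (i) the normalized Schwarz lemma $|f(x)|\le\varphi_K(|x|)$ via the $K_O$-inequality for the Gr\"otzsch ring, with the Hersch--Pfluger majorant $\varphi_K(r)\le 4^{1-1/K}r^{1/K}$; (ii) the M\"obius reduction $g=T_b\circ f\circ T_a^{-1}$ giving the sharp intermediate bound $\rho_{\B}(f(x),f(y))\le 2\arth\varphi_K(\th\frac{\rho_{\B}(x,y)}{2})$, which is exactly \cite[Theorem 11.2]{vu}; and (iii) the one-variable comparison of $\psi_K(t)=2\arth\varphi_K(\th(t/2))$ with $c(K)\max\{t,t^{1/K}\}$, which is how \cite[Theorem 11.3]{vu} and \cite{bv} obtain the linear form of the constant. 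The only places where your write-up is thinner than the literature are the steps you yourself flag as deferred: the monotonicity claims needed in (iii) (that $\psi_K(t)/t$ decreases and $\psi_K(t)/t^{1/K}$ increases, so that the extremal ratio sits at $t=1$) rest on nontrivial special-function inequalities for $\varphi_K$ from \cite{avv}, and the numerical bound $c(K)\le 1.3507(K-1)+K$ requires the explicit estimate $\varphi_K(r)\le$ (elementary expression) near $K=1$ rather than mere ``asymptotics.'' Since the paper itself treats all of this as black-boxed background, your proposal is at least as complete as the paper's treatment and follows the same route as the cited proofs.
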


Combining Theorem \ref{bvtrans} with Theorems \ref{hvz216}- \ref{2sc} we obtain distortion results for quasiregular mappings of the
unit disk into unit disk with respect to the Cassinian metric.

\begin{thm}
If $f:\B\rightarrow {\mathbb R}^2$ is a non-constant $K$-quasiregular map with $f\B\subset\B$ and $f(0)=0$, then
$$c_{\B}(0,f(x))\le \eta_K(c_{\B}(0,x)) $$
for all $x\in\B$.
\end{thm}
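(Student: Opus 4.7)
The plan is to reduce the claim to a one-variable comparison between $\varphi_K(r)$ and $\varphi_K(\sqrt r)^2$, and then combine this comparison with the Schwarz lemma from Theorem \ref{bvtrans}. First I would exploit the fact that when one of the arguments is the origin the Cassinian distance has an elementary closed form: for every $y\in\B$ the supremum in the definition \eqref{cm} of $c_\B$ is attained at $z=y/|y|$, giving $c_\B(0,y)=|y|/(1-|y|)$. Since $f(0)=0$, Theorem \ref{bvtrans} supplies $|f(x)|\le\varphi_K(|x|)$, so by the monotonicity of the map $u\mapsto u/(1-u)$ on $[0,1)$,
\[
c_\B(0,f(x))\;=\;\frac{|f(x)|}{1-|f(x)|}\;\le\;\frac{\varphi_K(|x|)}{1-\varphi_K(|x|)}.
\]

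Next I would unpack $\eta_K(c_\B(0,x))$ using its defining formula. Setting $t=c_\B(0,x)=|x|/(1-|x|)$, a direct computation gives $t/(1+t)=|x|$, hence $\sqrt{t/(1+t)}=\sqrt{|x|}$, and therefore
\[
\eta_K\bigl(c_\B(0,x)\bigr)\;=\;\frac{\varphi_K(\sqrt{|x|})^2}{1-\varphi_K(\sqrt{|x|})^2}.
\]
Invoking the monotonicity of $u\mapsto u/(1-u)$ once more, the desired inequality $c_\B(0,f(x))\le\eta_K(c_\B(0,x))$ is reduced, via the bound of the previous paragraph, to the single scalar inequality
\[
\varphi_K(r)\;\le\;\varphi_K(\sqrt r)^{\,2}\qquad (r\in[0,1],\ K\ge 1),
\]
or equivalently, writing $r=s^2$, to $\varphi_K(s^2)\le\varphi_K(s)^2$.

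The hard part is this last inequality, which is the only nontrivial ingredient in the argument. At $K=1$ it is a trivial identity since $\varphi_1=\mathrm{id}$, and at $K=2$ it reduces to $(1-s)^2\ge 0$ via the closed form $\varphi_2(r)=2\sqrt r/(1+r)$. For general $K\ge 1$ it expresses a log-concavity property of the Hersch--Pfluger distortion function, namely that $t\mapsto\log\varphi_K(e^{-t})$ is concave on $(0,\infty)$ with boundary value $0$ at $t=0$, so that $\log\varphi_K(e^{-2t})\le 2\log\varphi_K(e^{-t})$. This concavity follows from the characterization $\mu(\varphi_K(r))=\mu(r)/K$ combined with the standard convexity and monotonicity properties of the modulus $\mu$ collected in \cite{avv}. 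Once this scalar inequality is in place, the reductions of the first two paragraphs immediately yield the theorem.
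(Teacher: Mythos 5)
Your proposal follows essentially the same route as the paper's proof: use $c_{\B}(0,y)=|y|/(1-|y|)$ together with the Schwarz lemma bound $|f(x)|\le\varphi_K(|x|)$ from Theorem \ref{bvtrans}, observe that $\eta_K(c_{\B}(0,x))=\varphi_K^2(\sqrt{|x|})/(1-\varphi_K^2(\sqrt{|x|}))$, and reduce everything to the scalar inequality $\varphi_K(r)\le\varphi_K(\sqrt r)^2$. The only difference is that the paper disposes of this last inequality by citing \cite[Theorem 10.15]{avv}, whereas you sketch it via a log-concavity property of $\varphi_K$; that sketch should either be made precise or replaced by the citation, but the overall structure is identical.
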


\begin{proof}
By \cite[Example 3.9B]{i} and Theorem \ref{bvtrans}, we have
\begin{equation}\label{c0f}
c_{\B}(0,f(x))=\frac{|f(x)|}{1-|f(x)|}\leq\frac{\varphi_K(|x|)}{1-\varphi_K(|x|)}.
\end{equation}
It follows from \cite[Theorem 10.15]{avv} that
$$
\frac{\varphi_K(|x|)}{1-\varphi_K(|x|)}\leq\frac{\varphi^2_K(\sqrt{|x|})}{1-\varphi^2_K(\sqrt{|x|})}=\eta_K(|x|/(1-|x|))=\eta_K(c_{\B}(0,x)),
$$
which, combined with \eqref{c0f}, gives the desired result.
\end{proof}

The proof of Theorem \ref{casgrow} follows easily from the above results.

\bigskip
{\bf Acknowledgements.}  This research was started in International Conference on Geometric Function Theory and its Applications, December 18-21, 2014 in Kharagpur, India, organized by Professors B. Bhowmik and A. Vasudevarao.  P. Hariri and M. Vuorinen are indebted to Prof. S. Ponnusamy for making possible our participation to this meeting.
P. Hariri was supported by UTUGS, The Graduate School of the University of Turku.
X. Zhang was supported by the Academy of Finland project 268009.
The authors express their thanks to the referee for valuable suggestions.


\small

\end{document}